\newcommand{\R}{\mathbb{R}}
\newcommand*{\dom}{{\mathrm{dom}}}
\numberwithin{equation}{section}
\def\FH{Fr\'echet--Hoeffding\xspace}
\def\ud{\mathrm{d}}
\def\1{1}
\DeclareMathAccent{\what}{\mathord}{largesymbols}{"62}
\DeclareFontFamily{U}{mathx}{\hyphenchar\font45}
\DeclareFontShape{U}{mathx}{m}{n}{
      <5> <6> <7> <8> <9> <10>
      <10.95> <12> <14.4> <17.28> <20.74> <24.88>
      mathx10
      }{}
\DeclareSymbolFont{mathx}{U}{mathx}{m}{n}
\DeclareMathAccent{\widecheck}{0}{mathx}{"71}
\begin{document}

\title{Marginal and dependence uncertainty: bounds,\newline optimal transport, and sharpness}

\author[a,1]{Daniel Bartl}
\author[a,2]{Michael Kupper}
\author[b,3]{Thibaut Lux}
\author[c,4]{Antonis Papapantoleon}
\author[a,5]{\vspace{.35em}\newline \phantom{a}\hfill with an appendix by Stephan Eckstein}
\address[a]{{Department of Mathematics, University of Konstanz, Universit\"atsstra{\ss }e 10, 78464 Konstanz, Germany}}
\address[b]{{Department of Finance, Vrije Universiteit Brussel, Pleinlaan 2, 1050 Brussels, Belgium}}
\address[c]{{Department of Mathematics, National Technical University of Athens, Zografou Campus, 15780 Athens, Greece }}

\eMail[1]{daniel.bartl@uni-konstanz.de}
\eMail[2]{kupper@uni-konstanz.de}
\eMail[3]{tlux@consult-lux.de}
\eMail[4]{papapan@math.ntua.gr}
\eMail[5]{stephan.eckstein@uni-konstanz.de}

\abstract{
Motivated by applications in model-free finance and quantitative risk management, we consider Fr\'echet classes of multivariate distribution functions where additional information on the joint distribution is assumed, while uncertainty in the marginals is also possible.
We derive optimal transport duality results for these Fr\'echet classes that extend previous results in the related literature.
These proofs are based on representation results for increasing convex functionals and the explicit computation of the conjugates.
We show that the dual transport problem admits an explicit solution for the function $f=1_B$, where $B$ is a rectangular subset of $\R^d$, and provide an intuitive geometric interpretation of this result.
The improved \FH bounds provide ad-hoc upper bounds for these Fr\'echet classes.
We show that the improved \FH bounds are pointwise sharp for these classes in the presence of uncertainty in the marginals, while a counterexample yields that they are not pointwise sharp in the absence of uncertainty in the marginals, even in dimension 2.
The latter result sheds new light on the improved \FH bounds, since \citet{tankov} has showed that, under certain conditions, these bounds are sharp in dimension 2.
}

\keyWords{Dependence uncertainty, marginal uncertainty, Fr\'echet classes, improved \FH bounds, optimal transport duality, relaxed duality, sharpness of bounds.}

\keyAMSClassification{60E15, 49N15, 28A35}
% \ArXiV{1709.00641}
% \keyJELClassification{xxx}
\thanksColleagues{{A previous version was entitled ``Sharpness of improved Fr\'echet--Hoeffding bounds: an optimal transport approach''.}}

\date{} \maketitle \frenchspacing

\section{Introduction}

A celebrated result in probability theory are the \FH bounds, which provide a bound on the joint distribution function (or the copula) of a random vector in case only the marginal distributions are known.
Let $\mathcal{F}(F_1^\ast,\dots,F_d^\ast)$ denote the Fr\'echet class of cumulative distribution functions (cdf's) on $\mathbb{R}^d$ with (known) univariate marginal distributions $F_1^\ast,\dots,F_d^\ast$.
Then, the \FH bounds state that the following inequalities hold for all joint distribution functions with the given marginals, \textit{i.e.} for all $F\in\mathcal{F}(F_1^\ast,\dots,F_d^\ast)$ it holds that
\begin{align}
\label{intro3}
\Big(\sum_{i=1}^d F^\ast_i(x_i) - (d-1)\Big)^+ \leq F(x_1,\dots,x_d) \leq \min_{i=1,\dots,d} F^\ast_i(x_i), 
\end{align}
for all $(x_1,\dots,x_d)\in\mathbb{R}^d$.
These bounds can be combined with results on stochastic dominance, see \textit{e.g.} \citet{mueller}, which state that for certain classes of (cost or payoff) functions $\varphi$ the inequalities on the distribution function are preserved when considering integrals of the form $\int \varphi\,\ud F$. 
In other words, the \FH bounds combined with results from stochastic dominance yield upper and lower bounds on integrals of the form $\int \varphi\,\ud F$, over all possible joint distribution functions with given marginals $F_1^\ast,\dots,F_d^\ast$ (or, equivalently, over all copulas).

These results have found several applications in financial and insurance mathematics, since they allow to derive bounds on the prices of multi-asset options and on risk measures, such as the Value-at-Risk, in the framework of \textit{dependence uncertainty}, \textit{i.e.} when the marginal distributions are known and the joint distribution is not known, see \textit{e.g.} \citet{chen,cherubini,Dhaene_etal_2002_a,Dhaene_etal_2002_b,embrechts,embrechts2} and \citet{puccetti2012b}.
Analogous results have been also derived using methods from linear programming or optimal transport theory, see \textit{e.g.} \citet{dAspremont_ElGhaoui_2006,boyle,carlier2003,Han} and \citet{Hobson_Laurence_Wang_2005_2,Hobson_Laurence_Wang_2005_1}.
The optimal transport duality, also known as pricing-hedging duality in the mathematical finance literature, states, for example, that 
\begin{align}
\sup_{F\in\mathcal{F}(F_1^\ast,\dots,F_d^\ast)} \int \varphi \, \ud F
	\ = \inf_{\varphi_1+\dots+\varphi_d\ge \varphi} \Big\{ \int \varphi_1 \ud F_1^* + \dots + \int \varphi_d \ud F_d^* \Big\}.
\end{align}
In other words, using the language of mathematical finance, the model-free upper bound on the price of an option with payoff function $\varphi$ over all joint distributions with given marginals equals the infimum over all hedging strategies which consist of investing according to $\varphi_i$ in the asset with marginal $F_i^*$, subject to the condition that $\varphi_1+\dots+\varphi_d$ dominates the payoff function $\varphi$.

The main pitfall with the framework of dependence uncertainty is that the resulting bounds are too wide to be informative for applications; \textit{e.g.} the bounds for multi-asset option prices may coincide with the trivial no-arbitrage bounds.
On the other hand, we can infer from financial and insurance markets partial information on the dependence structure of a random vector which is not utilized in the \FH bounds and more generally the framework of dependence uncertainty, where only information on the marginals is taken into account. 

These considerations have led to increased attention on frameworks that could be termed \textit{partial dependence uncertainty}, \textit{i.e.} when additional information is available on the dependence structure.
The additional information available can take several forms, for example, some authors assume that the joint distribution function is known on some subset of its domain, others assume that the correlation (or more generally a measure of association) is known,  others assume that the variance of the sum is known or bounded, and so forth. 
We refer the reader to \citet{MR3752340,Rueschendorf_2018} for an overview of this literature, with emphasis on applications to Value-at-Risk bounds.

Analogously to the \FH bounds in the framework of dependence uncertainty, several authors have developed \textit{improved} \FH bounds that correspond to the framework of partial dependence uncertainty, see \textit{e.g.} \citet{nelsen,tankov} and \citet{lux2016,Lux_Papapantoleon_2016}.
The improved \FH bounds can accommodate different types of additional information, such as the knowledge of the distribution function on a subset of the domain and the knowledge of a measure of association.
In this article, we consider the following Fr\'echet class under additional information
\begin{equation}\label{sharp2}
\mathcal{F}^{S,\pi}(F_1^\ast,\dots,F_d^\ast) 
:= \big\{F\in\mathcal{F}(F_1^\ast,\dots,F_d^\ast)\colon F(s) 
= \pi_s \text{ for all } s\in S\big\},
\end{equation}
where $S\subset\mathbb{R}^d$ is an arbitrary set and $(\pi_s)_{s\in S}$ a family with values in $[0,1]$.
In other words, we consider all joint distribution functions with known marginals $F_1^\ast,\dots,F_d^\ast$ and known value $\pi_s$ for each $s\in S$, where $S$ is a subset of the domain.
The additional information on the joint distribution assumed in this class may not be directly observable in the markets, but can be implied from multi-asset option prices or other derivatives by arguments analogous to \citet{breeden}.
Improved \FH bounds for this class have been derived in \cite{tankov,lux2016} and read as follows:
\begin{align}\label{sharp3}
\begin{split}
&\Big(\sum_{i=1}^d F^\ast_i(x_i) - (d-1)\Big)^+ \vee \max\Big\{ \pi_s - \sum_{i=1}^d \big(F_i^\ast(s_i) - F_i^\ast(x_i)\big)^+ : s\in S\Big\} \\
&\quad	\le F(x_1,\dots,x_d) \le 
\min_{i=1,\dots,d} F^\ast_i(x_i) \wedge \min\Big\{ \pi_s + \sum_{i=1}^d \big(F_i^\ast(x_i) - F_i^\ast(s_i)\big)^+ : s\in S\Big\}.	
\end{split}
\end{align}
The authors in \cite{tankov,lux2016} have also used the improved \FH bounds in order to derive bounds for the prices of multi-asset derivatives in a framework of partial dependence uncertainty, and showed that the additional information incorporated in the bounds leads to a notable tightening of the option price bounds relative to the case without additional information.

One could ask though whether this framework is realistic for applications, in particular whether the assumption of perfect knowledge of the marginal distributions is supported by empirical evidence or stems from mathematical convenience.
We take the view that perfect knowledge of the marginals is not a realistic assumption, and are thus interested in frameworks that combine uncertainty in the marginals with partial uncertainty in the dependence structure. 
Toward this end, we introduce two Fr\'echet classes that correspond to this framework, and we are interested in studying their properties.

The classes we introduce allow us to consider simultaneously uncertainty in the marginal distributions, measured either by 0-th or by first order stochastic dominance, and additional information on the dependence structure, provided by values $\pi_s$ for $s\in S$.
Let us thus consider the following relaxed version of the class $\mathcal F^{S,\pi}$ in \eqref{sharp2}, provided by
\begin{align}\label{eq:fclass-0} 
	\mathcal{F}^{S,\pi}_{\preceq_0}(F_1^\ast,\dots,F_d^\ast)
		:= \bigg\{ cF : \begin{array}{l}
			c\in[0,1] \text{ and } F \text{ is a cdf on } \mathbb{R}^d \text{ such that } cF_i\preceq_0 F_i^\ast \\
			\text{for all } i=1,\dots, d \text{ and } cF(s)\leq \pi_s \text{ for all }s\in S  
		\end{array} \bigg\}, 
\end{align}
where $F_i$ is the $i$-th marginal distribution of $F$, and $cF_i\preceq_0 F_i^\ast$ means that $F_i^\ast$ dominates $cF_i$ in the $0$-th stochastic order, \textit{i.e.}~$cF_i\preceq_0 F_i^\ast$ if and only if $cF_i(t)-cF_i(s)\leq F_i^\ast(t)-F_i^\ast(s)$ for all $s\le t$.\footnote{Denote by $\mu$ and $\mu^\ast$ the (sub-)probabilities on the real line associated to $cF_j$ and $F_j^\ast$. Then a Dynkin argument shows that $cF_j\preceq_0 F_j^\ast$ if and only if $\mu(B)\leq \mu^\ast(B)$ for every Borel subset of $\mathbb{R}$.} 
(Note that for $c=1$, it follows from $F_i\preceq_0 F_i^\ast$ that $F_i=F_i^\ast$.) 
In other words, we consider the class of joint distribution functions (associated with sub-probability measures) whose marginals are dominated by $F_1^\ast,\dots,F_d^\ast$ in the 0-th stochastic order and whose value is smaller than $\pi_s$ for each point $s$ in a subset $S$ of the domain.
Moreover, we consider another relaxed version of the class $\mathcal F^{S,\pi}$ in \eqref{sharp2}, provided by
\begin{align}\label{eq:fclass-1}
	\mathcal{F}^{S,\pi}_{\preceq_1}(F_1^\ast,\dots,F_d^\ast)
		:= \bigg\{ F : \begin{array}{l}
			F \text{ is a cdf on } \mathbb{R}^d \text{ such that } F_i\preceq_1 F_i^\ast \text{ for all } i=1,\dots, d \\
			\text{ and } F(s)\leq \pi_s \text{ for all }s\in S  
		\end{array} \bigg\}, 
\end{align}
where $F_i\preceq_1 F_i^\ast$ means that $F_i$ first-order stochastically dominates $F^\ast_i$.
This class is very similar to the previous one, however now we consider probability measures on $\R^d$. 
Let us mention that there exist in the literature tests of first order stochastic dominance, see \textit{e.g.} \citet{Schmid_Trede_1996}.

These two classes belong to the framework described above, \textit{i.e.} they allow us to take into account and combine uncertainty in the marginal distributions with additional partial information on the dependence structure.
An easy computation using arguments from copula theory, that is deferred to Appendix \ref{sec:derivation}, shows that the improved \FH bounds of \cite{lux2016,tankov} hold true also for these two classes.
An analogous result appears already in \citet{puccetti2016}.
The contributions of this paper are then threefold:
\begin{itemize}[leftmargin=*]
\item We provide optimal transport, or pricing-hedging, duality results for each of the Fr\'echet classes 
	  $\mathcal{F}^{S,\pi},\mathcal{F}^{S,\pi}_{\preceq_0}$ and $\mathcal{F}^{S,\pi}_{\preceq_1}$. 
	  In other words, we show that the optimal transport duality holds in the presence of additional information; this generalizes previous results in the related literature, see \textit{e.g.} \citet{Rachev_Rueschendorf_1994}.
	  In the context of mathematical finance, we show that the pricing-hedging duality results hold also in the presence of additional information, in which case the hedging portfolio should also consist of positions in multi-asset options for which the additional information is available.
	  Moreover, the uncertainty in the marginals translates into trading constraints on the `dual' side, such as short-selling constraints.

\item We show that the dual transport problem for the function $f=1_B$, for rectangular sets $B\subset \R^d$, admits an explicit solution in the class 
	  $\mathcal{F}_{\preceq_0}^{S,\pi}$.
	  In the language of mathematical finance, we show that the superhedging problem for a multi-asset digital option under short-selling constraints admits an explicit solution, and explain the intuition behind this result.

\item Finally, we discuss the pointwise sharpness of the upper improved \FH bound for each of the classes 
	  $\mathcal{F}^{S,\pi},\mathcal{F}^{S,\pi}_{\preceq_0}$ and $\mathcal{F}^{S,\pi}_{\preceq_1}$.
	  An upper bound $\mathcal B$ is called \textit{sharp} for a certain class $\mathcal C$ if $\mathcal B\in\mathcal C$ and is called \textit{pointwise sharp} for $\mathcal C$ if
	  \[\sup_{F\in\mathcal C} F(x) = \mathcal B(x) \ \text{ for all $x\in\dom(F)$}.\] 
	  More specifically, we show that the upper improved \FH bound is pointwise sharp for the classes $\mathcal{F}^{S,\pi}_{\preceq_0}$ and $\mathcal{F}^{S,\pi}_{\preceq_1}$.
	  In addition, by means of a counterexample, we show that the same bound is \textit{not} pointwise sharp for the class $\mathcal{F}^{S,\pi}$, even in dimension $d=2$.
	  The latter result is surprising since \citet{tankov} has showed that under certain conditions on the set $S$ the upper bound is not only pointwise sharp but even sharp, \textit{i.e.} he actually showed that the upper improved \FH bound belongs to the Fr\'echet class $\mathcal{F}$.
\end{itemize}

This article is structured as follows: in Section \ref{sec:duality} we derive optimal transport dualities for the three Fr\'echet classes introduced above. 
In Section \ref{sec:solution.box} we provide an explicit solution of the dual transport problem for the function $f=1_B$, in the class $\mathcal{F}_{\preceq_0}^{S,\pi}$.
In Section \ref{sec:proof.main.sharp} we present and discuss the pointwise sharpness results for the improved \FH bounds.
Finally, Appendix \ref{sec:counterexample.not.sharp} contains the aforementioned counterexample, while Appendix \ref{sec:derivation} contains the derivation of the improved \FH bounds for the classes $\mathcal{F}^{S,\pi}_{\preceq_0}$ and $\mathcal{F}^{S,\pi}_{\preceq_1}$.

\section{Transport and relaxed transport duality under additional information}
\label{sec:duality}

In this section, we establish our main duality results. 
We derive a dual representation for the transport problem of maximizing the expectation of a $d$-dimensional cost or payoff function over probability or sub-probability measures whose univariate marginals are either given or dominated in the 0-th stochastic order and whose mass is prescribed on certain rectangles in $\mathbb{R}^d$. 
Using similar arguments, we also obtain a dual representation for a transport problem involving constraints in the form of estimates on the marginal distributions in the first stochastic order. 
Moreover, as a corollary we establish duality for a transport problem with constraints on the maximum of random variables.
We follow the notation of optimal transport theory in this section, and thus work with measures instead of distribution functions.

We start by introducing useful notions and notation. 
Let us denote by $ca^+(\R^d)$ the set of all finite measures on the Borel $\sigma$-field of $\mathbb{R}^d$, $d\ge1$, and by $ca^+_1(\R^d)$ (resp.~$ca^+_{\leq 1}(\R^d)$) the subset of those measures $\mu$ satisfying $\mu(\mathbb{R}^d)=1$ (resp.~$\mu(\mathbb{R}^d)\leq 1$). 
The space $\R^d$ might also be omitted from the notation in case there is no ambiguity.

Let $\nu_1,\dots,\nu_d\in ca^+_1(\mathbb{R})$ and define the sets
\[
A^i:=(-\infty,A^i_1]\times\cdots\times(-\infty,A^i_d]\subset\mathbb{R}^d
\]
for $i\in I$, where $I$ is an arbitrary index set.
Define for any \textit{cost} or \textit{payoff} function $f\colon\mathbb{R}^d\to\mathbb{R}$ the set
\[ 
\Theta(f):=\Big\{ (f_1,\dots,f_d,a) : f_1(x_1)+\dots+f_d(x_d) + \sum_{i\in I} a^i\1_{A^i}(x)\geq f(x),
\ \text{for all } x\in\mathbb{R}^d  \Big\}, 
\]
where $f_i\colon\mathbb{R}\to\mathbb{R}$ are bounded measurable functions and $a=(a^i)\in\mathbb{R}^I$, such that $a^i=0$ for all but finitely many $i\in I$.
Moreover, let $0\leq\underline{\pi}^i\leq\overline{\pi}^i\leq 1$ and define
\[ 
\pi(f_1,\dots,f_d,a) := \int_{\mathbb{R}}f_1\,\ud\nu_1 + \cdots+\int_{\mathbb{R}}f_d\,\ud\nu_d
	+ \sum_{i\in I}\big( a^{i+}\overline{\pi}^i-a^{i-}\underline{\pi}^i \big),
\]
for every $(f_1,\dots,f_d,a)\in \Theta(f)$, where $a^{i+}$ and $a^{i-}$ denote the positive and negative part of $a^i$ respectively, \textit{i.e.} $a^{i+}=\max\{a^i,0\}$ and $a^{i-}=\max\{-a^i,0\}$.
Denote by $\Theta_0(f)$ the set of all $(f_1,\dots,f_d,a)\in\Theta(f)$ such that
$f_1,\dots,f_d\geq 0$ and $a^i\geq 0$ for all $i\in I$.
Now define the functionals
\[ 
\phi(f) := \inf\big\{ \pi(f_1,\dots,f_d,a) : (f_1,\dots,f_d,a)\in\Theta(f) \big\} 
\]
and
\[ 
\phi_0(f) := \inf\big\{ \pi(f_1,\dots,f_d,a) : (f_1,\dots,f_d,a)\in\Theta_0(f) \big\}.
\]
Moreover, consider the sets of measures 
\[
\mathcal{Q} := \Big\{ \mu\in ca_1^+(\mathbb{R}^d):\mu_1=\nu_1,\,\dots,\,\mu_d=\nu_d 
	\text{ and } \underline{\pi}^i\leq\mu(A^i)\leq \overline{\pi}^i, \text{ for all } i\in I \Big\}
\]
and
\[
\mathcal{Q}_0 := \Big\{ \mu\in ca_{\leq 1}^+(\mathbb{R}^d):\mu_1\preceq_0 \nu_1,\,\dots,\,\mu_d\preceq_0 \nu_d 
	\text{ and } \mu(A^i)\leq \overline{\pi}^i, \text{ for all } i\in I \Big\},
\]
where $\mu_j$ denotes the $j$-th marginal of the measure $\mu$, while $\mu_j\preceq_0\nu_j$ should be understood as $\mu_j(B)\leq\nu_j(B)$ for every Borel set $B\subset\mathbb{R}$; the latter condition is also known as 0-th order stochastic dominance.

The following theorem establishes a Monge--Kantorovich duality under additional constraints in the context of optimal transportation or a pricing-hedging duality under additional information in the context of mathematical finance.
Indeed, in the context of optimal transportation we seek to maximize the total cost $\int f\ud \mu$ relative to transport plans $\mu$ with marginals $\nu_1,\dots,\nu_d$ which in addition satisfy the constraint $\underline{\pi}^i\leq\mu(A^i)\leq \overline{\pi}^i$ for $i\in I$.
We also consider a relaxed version of this problem, where we seek to maximize the same total cost relative to transport plans that are dominated by $\nu_1,\dots,\nu_d$ and satisfy the additional constraint $\mu(A^i)\leq \overline{\pi}^i$ for $i\in I$.

In the context of mathematical finance, let $f$ denote the payoff function of an option depending on multiple assets, whose joint distribution is $\mu$. 
Then, the right hand side in \eqref{eq:duality-ii} is the model-free superhedging price for this option assuming that the marginal distributions are known (\textit{i.e.} $\mu_1=\nu_1,\dots,\mu_d=\nu_d$), while there is also additional information present, in the form of the bounds $\underline\pi^i,\overline\pi^i$ on the price of the multi-asset digital options $1_{A^i}$, $i\in I$. 
The left hand side in \eqref{eq:duality-ii} describes hedging or trading strategies which consist of investing in options with payoff $f_i$ in the $i$-th marginal $\nu_i$, $i\in\{1,\dots,d\}$, and also buying $a^{i+}$ and selling $a^{i-}$ digital options with payoff $1_{A^i}$ at the prices $\overline\pi^i$ and $\underline\pi^i$ respectively, $i\in I$, subject to the requirement that the sum of these payoffs dominate $f$. 
The duality in \eqref{eq:duality-i} represents a relaxation of the problem described above, where on the one side uncertainty in the marginals is taken into account, while on the other side the trading strategies are subject to short-selling constraints (\textit{i.e.} they are positive).

\begin{definition}
A \textit{trading strategy} $(f_1,\dots,f_d,a)$ which satisfies
\[
(f_1,\dots,f_d,a)\in\Theta(\varepsilon) \quad \mbox{and} \quad \pi(f_1,\dots,f_d,a)\le 0
\]
for some $\varepsilon>0$ is called a \emph{uniform strong arbitrage}.
\end{definition}

\begin{remark}
The strategy described above is called a uniform strong arbitrage because its price at inception is less than or equal to zero, while its outcome is bounded from
below by $\varepsilon>0$.
The next theorem relates the absence of uniform strong arbitrage with the existence of an element in $\mathcal Q$, the set of probability measures with given marginals that satisfy the condition $\underline{\pi}^i\leq\mu(A^i)\leq \overline{\pi}^i$ for all $i\in \mathcal I$.
In other words, the absence of arbitrage allows us to conclude something about probability measures with given marginals, and \textit{vice versa}.
Notice that the absence of uniform strong arbitrage is a very weak condition, that is implied by the classical no-arbitrage conditions.
\end{remark}

\begin{theorem}
\label{thm:main.rep}
Let $f\colon\mathbb{R}^d\to\mathbb{R}$ be an upper semicontinuous and bounded function.
Then, there does not exist a uniform strong arbitrage if and only if $\mathcal{Q}$ is not empty. 
In this case,
\begin{align}\label{eq:duality-ii}
\phi(f)=\max_{\mu\in\mathcal{Q}} \int_{\mathbb{R}^d}f \,\ud\mu.
\end{align}
Moreover, 
\begin{align}\label{eq:duality-i}
\phi_0(f)=\max_{\mu\in\mathcal{Q}_0} \int_{\mathbb{R}^d}f \,\ud\mu.
\end{align}
\end{theorem}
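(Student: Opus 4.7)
The plan is to treat $\phi$ and $\phi_0$ as sublinear, monotone functionals on $C_b(\R^d)$ and identify their duals as measures in $\mathcal{Q}$ and $\mathcal{Q}_0$. The easy (weak) direction is straightforward: for any $\mu\in\mathcal{Q}$ and admissible $(f_1,\dots,f_d,a)\in\Theta(f)$, integrating the superhedging inequality gives
\[
\int f\,\ud\mu \le \sum_{i=1}^d \int f_i\,\ud\nu_i + \sum_{i\in I} a^i \mu(A^i),
\]
and splitting $a^i = a^{i+}-a^{i-}$ together with $\underline{\pi}^i\le \mu(A^i)\le\overline{\pi}^i$ yields $\int f\,\ud\mu \le \pi(f_1,\dots,f_d,a)$, hence $\int f\,\ud\mu \le \phi(f)$. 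The analogous argument for $\mathcal{Q}_0$ uses $\mu_j\preceq_0\nu_j$ together with $f_i,a^i\ge 0$ to replace equality of marginals by the domination inequality and to drop the lower-bound term.

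For the non-trivial direction, restrict first to $f\in C_b(\R^d)$. One checks that $\phi$ is sublinear, monotone, translation invariant in the sense $\phi(f+c)=\phi(f)+c$ (take $f_1\mapsto f_1+c$), and, crucially, continuous from above in the sense $\phi(f_n)\downarrow 0$ whenever $f_n\in C_b(\R^d)$ decreases pointwise to $0$. The last property is the analytic core of the argument: one exploits tightness of the fixed marginals $\nu_1,\dots,\nu_d$ to cut off the $f_n$ outside large compact rectangles and thereby reduce to a uniform-convergence argument. Once these properties are in place, the standard Daniell–Stone / Fenchel–Moreau type representation for monotone sublinear functionals on $C_b(\R^d)$ (exactly the representation results alluded to in the introduction) delivers
\[
\phi(f) = \sup\bigl\{\textstyle\int f\,\ud\mu : \mu\in ca^+(\R^d),\ \textstyle\int g\,\ud\mu \le \phi(g)\ \text{for all } g\in C_b(\R^d)\bigr\}.
\]
The concrete computation of the conjugate of $\phi$ then pins down the admissible $\mu$: plugging $g(x)=\pm h(x_j)$ for $h\in C_b(\R)$ into $\int g\,\ud\mu\le\phi(g)$ forces $\mu_j=\nu_j$ (and for $\phi_0$, where only nonnegative $f_i$ are allowed, forces $\mu_j\preceq_0\nu_j$), while $g=\pm t\,1_{A^i}$ approximated by continuous functions from above gives $\underline{\pi}^i\le\mu(A^i)\le\overline{\pi}^i$ (respectively $\mu(A^i)\le\overline{\pi}^i$). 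Hence the dual set coincides with $\mathcal{Q}$ (respectively $\mathcal{Q}_0$).

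To pass from continuous $f$ to upper semicontinuous bounded $f$, approximate $f$ from above by a decreasing sequence $f_n\in C_b(\R^d)$ and use monotone convergence together with the inequality $\phi(f)\le\phi(f_n)$. Attainment of the supremum follows from weak compactness of $\mathcal{Q}$ (or $\mathcal{Q}_0$): the fixed or dominated marginals yield tightness, and upper semicontinuity of the map $\mu\mapsto\int f\,\ud\mu$ with the Portmanteau theorem gives a maximizer. For the non-emptiness characterization, note that $\mathcal{Q}\ne\emptyset$ trivially rules out any uniform strong arbitrage, since $\mu\in\mathcal{Q}$ applied to $(f_1,\dots,f_d,a)\in\Theta(\varepsilon)$ gives $\pi(f_1,\dots,f_d,a)\ge\varepsilon>0$. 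Conversely, the absence of uniform strong arbitrage says exactly $\phi(\varepsilon)>0$ for every $\varepsilon>0$, i.e.\ that $\phi$ is non-degenerate; together with sublinearity this forces the dual set in the representation above to be non-empty, which is $\mathcal{Q}$.

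The main obstacle I expect is the continuity-from-above property of $\phi$ (and $\phi_0$) that drives the representation theorem, because the index set $I$ may be infinite and the constraint functions $1_{A^i}$ are not continuous; tightness has to be argued carefully using only the fixed marginals and the fact that only finitely many $a^i$ are nonzero in any given strategy. A secondary subtle point is the identification step for $\phi_0$: the one-sided positivity constraints $f_i\ge 0$ and $a^i\ge 0$ must be translated into one-sided inequalities $\mu_j\preceq_0\nu_j$ and $\mu(A^i)\le\overline{\pi}^i$ on the dual side, and one must also check that the admissible $\mu$ need only be sub-probabilities, which comes from testing $\phi_0$ against the nonnegative constant function $1$.
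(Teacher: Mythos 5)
Your proposal takes essentially the same route as the paper: establish sublinearity, monotonicity and continuity from above of $\phi$ and $\phi_0$ on $C_b$, invoke the max-representation for such functionals, compute the conjugate explicitly to identify the dual set with $\mathcal{Q}$ (resp.\ $\mathcal{Q}_0$), and then extend to $U_b$. The weak direction, the conjugate computation (testing with $g(x)=\pm h(x_j)$, Urysohn approximations of $1_{A^i}$, and constants to pin down $\mu(\mathbb{R}^d)$), and the arbitrage equivalence all match the paper's Steps~1--3.

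Two small points worth flagging. First, the translation invariance $\phi_0(f+c)=\phi_0(f)+c$ does not hold for $c<0$ because $\Theta_0$ requires $f_1,\dots,f_d\ge 0$; fortunately you only need $\phi_0(m)\le m$ for $m\ge 0$ together with no-arbitrage (resp.\ the representation for $\phi$), so nothing breaks, but the claim should be restricted to $\phi$. Second, your passage to $U_b$ and attainment relies on $\mathcal{Q}$ being weakly compact. Tightness from the fixed marginals is immediate, but closedness of the constraint $\mu(A^i)\le\overline{\pi}^i$ is not automatic for closed $A^i$ (the Portmanteau inequality for closed sets runs in the wrong direction). The constraint is nevertheless closed here because the marginals are fixed: for $\mu_n\to\mu$ weakly with $\mu_{n,j}=\nu_j$, take the open set $U_\delta=\prod_j(-\infty,A^i_j+\delta)$, note $\liminf_n\mu_n(U_\delta)\ge\mu(U_\delta)\ge\mu(A^i)$ while $\mu_n(U_\delta\setminus A^i)\le\sum_j\nu_j((A^i_j,A^i_j+\delta))\to 0$ as $\delta\downarrow 0$, which yields $\mu(A^i)\le\limsup_n\mu_n(A^i)\le\overline{\pi}^i$. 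The paper instead obtains attainment and the $U_b$-extension directly from the cited representation theorem by verifying $\phi^\ast_{C_b}=\phi^\ast_{U_b}$ on $ca^+$, which sidesteps the compactness argument; your approximation route works too, once the closedness observation above is supplied.
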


\begin{remark}
The optimal transport duality under additional information \eqref{eq:duality-ii} appears in a similar form in \citet[Theorem~3.2]{Lux_Rueschendorf_2017}. 
These two results were developed in parallel, however their proofs are completely different. 
Moreover, in \cite{Lux_Rueschendorf_2017} the authors consider the Fr\'echet class of $d$-dimensional probability distributions with given marginals, whose copulas are bounded from below and above by arbitrary quasi-copulas; a quasi-copula generalizes the notion of a copula. 
In view of \eqref{eq:duality-ii}, our formulation is slightly more general as we do not require the bounds  $(\underline{\pi}^i, \overline{\pi}^i)_{i\in I}$ to have a particular structure as imposed by a quasi-copula.
\end{remark}

The proof of this theorem builds on the following representation result for convex and increasing functions, and an explicit computation of the conjugates.
Let us first introduce the functionals $\phi^\ast_{C_b}$ and $\phi^\ast_{U_b}$ which are defined as follows:
\begin{align}\label{eq:phi^*-functionals}
\phi^\ast_{C_b}(\mu):=\sup_{f\in C_b} \Big\{\int f\ud\mu-\phi(f)\Big\}
	\ \ \text{ and } \ \
\phi^\ast_{U_b}(\mu):=\sup_{f\in U_b} \Big\{\int f\ud\mu -\phi(f)\Big\},
\end{align}
where $U_b$ denotes the set of all bounded upper semicontinuous functions $f:\R^d\to\R$, and $C_b$ the set of all bounded continuous functions.
Then the following holds true:

Let $\phi\colon U_b\to\mathbb{R}$ be a convex and increasing function, and assume that for every sequence $(f^n)$ of continuous bounded functions such that $f^n$ decreases pointwise to 0, it holds that $\phi(f^n)\downarrow \phi(0)$. 
Then, $\phi$ admits the following representation:
\begin{align}\label{eq:limmed-entropic}
\phi(f)=\max_{\mu\in ca^+} \Big\{ \int f\ud\mu-\phi^\ast_{C_b}(\mu) \Big\}	
\end{align}
for all $f\in C_b$.
Assume, in addition, that $\phi^\ast_{C_b}(\mu)=\phi^\ast_{U_b}(\mu)$ for any 
$\mu\in ca^+$, then $\phi(f)=\max_{\mu\in ca^+} \{ \int f\ud\mu-\phi^\ast_{U_b}(\mu) \}$ for all $f\in U_b$.
The proof is similar to \citet[Theorem 2.2]{bartl2017duality}; see also \citet[Theorem A.5]{RobHeding}.

\begin{proof}[of Theorem \ref{thm:main.rep}]
We start by noting that
\[ 
\Theta(\lambda f)=\lambda \Theta(f)
	\quad\text{and}\quad
\Theta(f)+\Theta(g)\subset\Theta(f+g) 
\]
for every $\lambda>0$ and every two functions $f,g\colon\mathbb{R}^d\to\mathbb{R}$.
Moreover, it holds that
\[
\pi(\lambda f_1,\dots,\lambda f_d,\lambda a) = \lambda\pi(f_1,\dots,f_d,a),
\]
while from the inequalities $a^++b^+\ge(a+b)^+$ and $a^-+b^-\ge(a+b)^-$ and the condition $0\le \underline\pi^i\le \overline\pi^i$ it follows that
\[ 
\pi(f_1+g_1,\dots,f_d+g_d,a+b) \leq \pi(f_1,\dots,f_d,a) + \pi(g_1,\dots,g_d,b).
\]
Therefore, we get that $\phi$ is a sublinear functional, \textit{i.e.} $\phi(\lambda f)=\lambda \phi(f)$ for $\lambda>0$ and	$\phi(f+g)\leq\phi(f)+\phi(g)$.
The same arguments apply to $\Theta_0$, hence $\phi_0$ is sublinear as well.
Moreover, since
\[ 
(m,0,\dots,0)\in\Theta_0(m) \quad\text{and}\quad \pi(m,0,\dots,0)=m \quad\text{for } m\in\R_+,
\]
it follows that $\phi(m)\leq\phi_0(m)\leq m$. 

The main part of the proof is to show that if $\phi$ is real-valued, then $\phi$ has the representation \eqref{eq:duality-ii}.

\textit{Step 1:} 
We claim that if there does not exist uniform strong arbitrage, then $\phi(m)=m$ for all $m\in\mathbb{R}$.
We have already shown that $\phi(m)\leq m$.
On the other hand, if $\phi(m)<m-\varepsilon$ for some $\varepsilon>0$, there exists $(f_1,\dots,f_d,a)\in\Theta(m)$ such that $\pi(f_1,\dots,f_d,a)\le m-\varepsilon$.
Define 
\[
f_j'(x):= f_j(x)-\frac{m-\varepsilon}{d}
\]
for every $x\in\R$ and $j=1,\dots,d$.
Then
\[
(f_1',\dots,f_d',a)\in\Theta(\varepsilon) \quad\text{but}\quad \pi(f_1',\dots,f_d',a)\le 0, 
\]
which contradicts the assumption that there does not exist uniform strong arbitrage.

\textit{Step 2:}
We claim that $\phi$ and $\phi_0$ are continuous from above on $C_b$, \textit{i.e.} that $\phi(f^n)\downarrow 0$ for every sequence $(f^n)$ in $C_b$ such that $f^n\downarrow 0$ pointwise.
Let us fix such a sequence $(f^n)$, some $\varepsilon>0$, and let $l$ be such that
\begin{align}\label{eq:nu-bound}
\nu_i([-l,l]^c)\leq \frac{\varepsilon}{d\sup_x |f^1(x)|},
	\quad \text{for all } i\in\{1,\dots,d\}.
\end{align}
The set $K:=[-l,l]^d\subset\mathbb{R}^d$
is compact, therefore we can apply Dini's Lemma to obtain some index $n_0$ such that 
\begin{align}\label{eq:bound-K}
f^n\1_K\leq \varepsilon \quad\text{for all }n\geq n_0.
\end{align}
Now we define 
$f_j(x):=\sup_x |f^1(x)|\1_{[-l,l]^c}$ 	 
for $j\in\{1,\dots,d\}$ such that 
\[
f^n \1_{K^c}(x)
	\leq f^1 \1_{K^c}(x)
	\leq f_1(x_1)+\dots+f_d(x_d),
\]
thus $(f_1,\dots,f_d,0)\in\Theta_0(f^n\1_{K^c})$. 
Therefore we have
\begin{align}\label{eq:bound-Kc}
\phi_0(f^n\1_{K^c})
	\leq \pi(f_1,\dots,f_d,0)
	= \int_{\mathbb{R}}f_1\,\ud\nu_1 +\cdots+ \int_{\mathbb{R}}f_d\,\ud\nu_d
	\leq \varepsilon,
\end{align}
where the last inequality follows from \eqref{eq:nu-bound} and the definition of $f_j$.
Subadditivty together with \eqref{eq:bound-K} and \eqref{eq:bound-Kc} thus implies 
\[
\phi_0(f^n) \leq \phi_0(f^n\1_K)+\phi_0(f^n\1_{K^c}) \leq \varepsilon+\varepsilon\]
for all $n\geq n_0$.
As $\varepsilon$ was arbitrary, we conclude that indeed $\phi_0(f^n)\downarrow 0=\phi_0(0)$ and thus also $\phi(f^n)\downarrow0$, since  $\phi(f^n)\leq\phi_0(f^n)$ and $\phi(0)=0$.

\textit{Step 3:}
In a final step we want to show that
\[ 
\phi^*_{C_b}(\mu) = \phi^*_{U_b}(\mu) 
	= \begin{cases}
		0, &\text{if } \mu\in\mathcal{Q},\\
		+\infty, &\text{ otherwise},
	\end{cases}
\]
and
\[ 
\phi_{0,C_b}^*(\mu) = \phi_{0,U_b}^*(\mu)
	=\begin{cases}
		0, &\text{if } \mu\in\mathcal{Q}_0,\\
	+\infty, &\text{ otherwise}
	\end{cases}
\]
for every $\mu\in ca^+$. 
Here $\phi_{0,C_b}^*$ and $\phi_{0,U_b}^*$ are defined analogously to  $\phi^*_{C_b}$ and $\phi^*_{U_b}$; cf. \eqref{eq:phi^*-functionals}.

On the one hand, we will show that the conjugates take the value $+\infty$ whenever the measure $\mu \notin \mathcal Q$, resp. $\mu \notin \mathcal Q_0$.
Notice that, by definition, $0\leq\phi^*_{C_b}\leq\phi^*_{U_b}$ and similarly $0\leq\phi_{0,C_b}^*\leq\phi_{0,U_b}^*$.
Since $\phi(m)=m$ for $m\in\mathbb{R}$, it follows that
\[
\phi^\ast_{C_b}(\mu)
	\geq \sup_{m\in\mathbb{R}} \big\{ m\mu(\mathbb{R}^d) -m \big\}
	= +\infty
\]
whenever $\mu$ is not a probability measure.
Analogously, we get that
\[ 
\phi_{0,C_b}^\ast(\mu)
	\geq \sup_{m\geq0 } \big\{ m\mu(\mathbb{R}^d) -m \big\}
	= +\infty 
\]
whenever $\mu(\mathbb{R}^d)>1$.

Let now $\mu_j(B) \neq \nu_j(B)$ (resp. $\mu_j(B)>\nu_j(B)$) for some Borel set $B\subset\mathbb{R}$ and some $j\in\{1,\dots,d\}$.
Then there exists a continuous bounded function $h\colon\mathbb{R}\to\mathbb{R}$ (resp. $h\colon\mathbb{R}\to[0,+\infty)$) such that
\[ 
\int_{\mathbb{R}}h\,\ud\mu_j> \int_{\mathbb{R}} h\,\ud\nu_j.
\]
Moreover, we can define a function $f$ via
$f(x):= h(x_j) \text{ for } x\in\mathbb{R}^d$, 
which is continuous and bounded.
We can also define 
\[
f_j(x):=h(x) \, \text{ and } \, f_k(x):=0 \text{ for } x\in\R, \,\text{ and } a=0\]
for all $k\in\{1,\dots,d\}\setminus\{j\}$.
Then, by construction it holds
\[
(f_1,\dots,f_d,a)\in\Theta(f)
	\quad \text{and} \quad
\pi(f_1,\dots,f_d,a)=\int_{\mathbb{R}}h\,\ud\nu_j \]
(resp.~$(f_1,\dots,f_d,a)\in\Theta_0(f)$),
hence we get that
\[
\int_{\mathbb{R}^d}f \ud\mu -\phi(f)
	\geq  \int_{\mathbb{R}}h\,\ud\mu_j -\int_{\mathbb{R}}h\,\ud\nu_j
	>0.
\]
Since $\phi(\lambda f)=\lambda\phi(f)$ for every $\lambda>0$, it follows that
\[ 
\phi^\ast_{C_b}(\mu)
	\geq \sup_{\lambda>0} \Big\{ \int_{\mathbb{R}^d}\lambda f \ud\mu -\phi(\lambda f) \Big\}
	=+\infty. 
\]
The same arguments show that $\phi_{0,C_b}^\ast(\mu)=+\infty$.

The final condition for $\mu\in\mathcal Q$ (resp. $\mu\in\mathcal Q_0$) reads as $\underline{\pi}^i\leq \mu(A^i)\leq \overline{\pi}^i$ (resp. $\mu(A^i)\leq\overline{\pi}^i)$, for all $i\in I$.
Assume that $\mu(A^i)\geq\overline{\pi}^i+\varepsilon$ for some $\varepsilon>0$ and some $i\in I$.
We may choose $\delta>0$ such that
\[
\nu_j((A_j^i,A_j^i+\delta)) < \frac\varepsilon{d}
\] 
for all $j\in\{1,\dots,d\}$.
Define the set
\[
C:=(-\infty,A_1^i+\delta)\times\cdots\times (-\infty,A_d^i+\delta).
\] 
Then $A^i$ and $C^c$ are closed and disjoint sets, so that Urysohn's Lemma guarantees the existence of a continuous function $f\colon\mathbb{R}^d\to\mathbb{R}$ such that 
\begin{align}\label{eq:indicatorsAC}
\1_{A^i}\leq f\leq \1_{C}.
\end{align}
Observe that
\[
f(x) \le \1_C 
	\leq \1_{A^i}(x) + \1_{(A^i_1,A^i_1+\delta)}(x_1) + \cdots + \1_{(A^i_d,A^i_d+\delta)}(x_d),
\] 
since every element in $C$ belongs either to $A^i$ or one of the sets $(A^i_j,A^i_j+\delta)$, $j\in\{1,\dots,d\}$.
Hence, it follows that
\[ 
(f_1,\dots,f_d,a)\in\Theta_0(f)\subset\Theta(f), 
\]
where we have defined
\[ 
f_j(x):=\1_{(A^i_j,A^i_j+\delta)}(x), x\in\R, \quad \text{and} \quad a^i=1, \, a^k=0\]
for all $j\in\{1,\dots,d\}$ and $k\in I\setminus\{i\}$.
Thus we have now
\begin{align}\label{eq:intrmd}
\phi(f) \leq \phi_0(f) \leq \pi(f_1,\dots,f_d,a)
	= \overline{\pi}^i+\sum_{j=1}^\ud\nu_j((A^i_+1,A^i_d+\delta))
	< \overline{\pi}^i+\varepsilon.
\end{align}
Therefore, using \eqref{eq:indicatorsAC}, \eqref{eq:intrmd} and the assumption, we get 
\[ 
\int_{\mathbb{R}^d} f\,\ud\mu-\phi_0(f) > \mu(A^i) - (\overline\pi^i + \varepsilon) \ge 0
	\quad\text{and also }\quad
\int_{\mathbb{R}^d} f\,\ud\mu-\phi(f)>0.
\] 
Therefore, a scaling argument as before shows that $\phi_{0,C_b}^\ast(\mu)=\phi^\ast_{C_b}(\mu)=+\infty$.

Last, assume that $\mu(A^i)<\underline{\pi}^i$ for some $i\in I$.
By the closedness of the set $A^i$, there exists a sequence of continuous functions $f^n$ such that 
\[
-1 \leq f^n \leq -\1_{A^i} \quad\text{and}\quad f^n\uparrow -\1_{A^i}.
\] 
Then $(0,\dots,0,a)\in\Theta(f^n)$ for $a^i=-1$ and $a^k=0$ for $k\in I\setminus\{i\}$, so that it holds 
\[
\phi(f^n)\leq \pi(0,\dots,0,a)=\underline{\pi}^i.
\]
By the dominated convergence theorem there exists an $n$ such that $\int_{\mathbb{R}^d} f^n\,\ud\mu >-\underline{\pi}^i$, hence
\[ 
\int_{\mathbb{R}^d} f^n\,\ud\mu  - \phi(f^n)>0.
\] 
A scaling argument shows again that $\phi^\ast_{C_b}(\mu)=+\infty$.

On the other hand, we will show that if $\mu\in\mathcal{Q}$ (resp.~$\mu\in\mathcal{Q}_0$) then it holds that $\phi^\ast_{U_b}(\mu)=0$ (resp. $\phi_{0,U_b}^\ast(\mu)=0$).
Indeed, let $f\colon\mathbb{R}^d\to\mathbb{R}$ be an upper semicontinuous and bounded function such that $(f_1,\dots,f_d,a)\in\Theta(f)$ (resp.~$(f_1,\dots,f_d,a)\in\Theta_0(f)$).
Then
\begin{align*}
\pi(f_1,\dots,f_d,a)
	&\geq \int_{\mathbb{R}^d} \big\{ f_1(x_1)+\dots+f_d(x_d)+ \sum_{i\in I} a^i\1_{A^i}(x) \big\}\,\mu(\ud x)
	\geq \int_{\mathbb{R}^d} f(x)\,\mu(\ud x).
\end{align*}
Therefore, we have that
\[ 
\phi(f)\geq  \int_{\mathbb{R}^d} f\,\ud\mu
	\quad\text{and}\quad
\phi_0(f)\geq  \int_{\mathbb{R}^d} f\,\ud\mu,
\]
which immediately yields the claim.
This concludes Step 3.

Now, in order to deduce that $\phi(f)$ and $\phi_0(f)$ have the desired representation, we will make use of representation \eqref{eq:limmed-entropic}. 
The sublinearity of $\phi$ implies in particular that it is convex.
Moreover, for $f\ge g$ it holds that $\Theta(f) \subseteq \Theta(g)$, hence $\phi(f) \ge \phi(g)$, \textit{i.e.} $\phi$ is also increasing, while the second step shows that $\phi$ satisfies the remaining condition for representation \eqref{eq:limmed-entropic} to hold.
The same arguments apply also for $\phi_0$.
Therefore, \eqref{eq:limmed-entropic} allows us to obtain
\[ 
\phi(f)
	= \max_{\mu\in ca_+} \Big\{ \int_{\mathbb{R}^d}f\,\ud\mu -\phi^\ast_{C_b}(\mu)\Big\}
	= \max_{\mu\in\mathcal{Q}} \int_{\mathbb{R}^d}f\,\ud\mu,
\]
and 
\[ 
\phi_0(f)
	= \max_{\mu\in ca_+} \Big\{ \int_{\mathbb{R}^d}f\,\ud\mu -\phi_{0,C_b}^\ast(\mu)\Big\}
	= \max_{\mu\in\mathcal{Q}_0} \int_{\mathbb{R}^d}f\,\ud\mu,  
\]
for every bounded and upper semicontinuous function $f$.
In addition, we get that the sets $\mathcal{Q}$ and $\mathcal{Q}_0$ are not empty since $\phi$ and $\phi_0$ are real-valued.

Finally, in order to conclude the proof, notice that if $\mathcal{Q}$ is not empty, then there does not exist uniform strong arbitrage. Indeed, for any $\varepsilon>0$ and $(f_1,\dots,f_d,a)\in\Theta(\varepsilon)$ it follows that for $\mu\in\mathcal Q$ it holds 
\begin{align*}
\pi(f_1,\dots,f_d,a)
	&\geq \int_{\mathbb{R}^d} \Big\{ f_1(x_1)+\dots+f_d(x_d) + \sum_{i\in I} a^i\1_{A^i}(x) \Big\} \, \mu(\ud x)\\
	&\geq \int_{\mathbb{R}^d} \varepsilon\,\mu(\ud x) = \varepsilon.
\end{align*}
\end{proof}

As a corollary of Theorem \ref{thm:main.rep} we derive in the following a duality result for a maximum transport problem. 
This problem corresponds to the situation where, besides the marginal distributions, the value of the measures is prescribed on an increasing track in $\mathbb{R}^d$. 
In terms of random variables, this is equivalent to knowing the distribution of the maximum of $d$ random variables. 

\begin{corollary}[Maximum transport problem]
Let $I=\mathbb{R}$, $A^i=(-\infty,i]^d$ and $\underline{\pi}^i=\overline{\pi}^i=\nu_{\max}((-\infty,i])$ for some measure $\nu_{\max}\in ca_1^+(\mathbb{R})$.
Then 
\begin{align}\label{eq:Q-max}
\mathcal{Q} = \Big\{ \mu\in ca_1^+(\mathbb{R}^d): \mu_1=\nu_1,\,\dots,\,\mu_d=\nu_d 
	\text{ and } \mu\circ{\max}^{-1}=\nu_{\max} \Big\},
\end{align}
and for every upper semicontinuous bounded function $f\colon\mathbb{R}^d\to\mathbb{R}$ one has
\begin{align*}%\label{eq:phi-phimax}
\phi(f) = \inf\Big\{ \sum_{j=1}^d \int_{\mathbb{R}} f_j\,\ud\nu_j + \int_\mathbb{R} g\,\ud\nu_{\max} : f_1,\dots,f_d,g \Big\}, 
\end{align*}
where $f_1,\dots,f_d,g\colon\mathbb{R}\to\mathbb{R}$ are bounded and measurable functions such that
\begin{align}\label{eq:ineq-max}
f_1(x_1)+\dots+f_d(x_d)+g(\max x) \geq f(x), \ \text{ for all } x\in\mathbb{R}^d,
\end{align}
where $\max x:=\max_{j=1,\dots,d} x_j$ for $x\in\mathbb{R}^d$.
\end{corollary}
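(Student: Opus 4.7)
My approach is to deduce this as a direct corollary of Theorem~\ref{thm:main.rep}. First I would identify $\mathcal{Q}$ as claimed in \eqref{eq:Q-max}. Since $A^i=\max^{-1}((-\infty,i])$, one has $\mu(A^i)=(\mu\circ\max^{-1})((-\infty,i])$, so the constraint $\mu(A^i)=\nu_{\max}((-\infty,i])$ for every $i\in\mathbb{R}$ is equivalent to the cdfs of $\mu\circ\max^{-1}$ and $\nu_{\max}$ coinciding. As a Borel measure on $\mathbb{R}$ is determined by its cdf, this is in turn equivalent to $\mu\circ\max^{-1}=\nu_{\max}$, giving \eqref{eq:Q-max}.

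Denote the proposed right-hand side for $\phi(f)$ by $\phi^{\max}(f)$. For the inequality $\phi^{\max}(f)\leq\phi(f)$, I would observe that every $(f_1,\dots,f_d,a)\in\Theta(f)$ induces an admissible tuple on the right via the bounded simple function $g(t):=\sum_{i\in I} a^i\1_{(-\infty,i]}(t)$, which is well defined since only finitely many $a^i$ are nonzero. Indeed, $g(\max x)=\sum_i a^i\1_{A^i}(x)$, so \eqref{eq:ineq-max} holds; moreover, because $\underline{\pi}^i=\overline{\pi}^i=\nu_{\max}((-\infty,i])$, the dual cost rewrites as $\pi(f_1,\dots,f_d,a)=\sum_j\int f_j\,\ud\nu_j+\int g\,\ud\nu_{\max}$. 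Taking the infimum yields $\phi^{\max}(f)\leq\phi(f)$.

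For the reverse inequality, assuming $\mathcal{Q}$ is nonempty (equivalently, no uniform strong arbitrage, so Theorem~\ref{thm:main.rep} applies), one has $\phi(f)=\max_{\mu\in\mathcal{Q}}\int f\,\ud\mu$. For any such $\mu$ and any admissible $(f_1,\dots,f_d,g)$ on the right-hand side, integrating \eqref{eq:ineq-max} against $\mu$ gives $\int f\,\ud\mu\leq\sum_j\int f_j\,\ud\nu_j+\int g\,\ud\nu_{\max}$, using $\mu_j=\nu_j$ and $\mu\circ\max^{-1}=\nu_{\max}$. Taking the supremum on the left and the infimum on the right closes the duality.

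The only point requiring more than bookkeeping is the identification of $\mathcal{Q}$, which ultimately rests on the uniqueness of a measure on $\mathbb{R}$ given its cdf; everything else is a direct consequence of the correspondence between finitely supported families $(a^i)_{i\in I}$ and bounded measurable functions of $\max x$, combined with the symmetric choice $\underline{\pi}^i=\overline{\pi}^i$ which eliminates the absolute-value structure of $\pi$ and lets $a^i$ take both signs freely.
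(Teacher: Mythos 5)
Your proof is correct and follows essentially the same route as the paper's: identify $\mathcal{Q}$ via the cdf of $\mu\circ\max^{-1}$, show $\phi^{\max}\leq\phi$ by converting the finitely supported $(a^i)$ into the step function $g=\sum_i a^i\1_{(-\infty,i]}$, and close the reverse inequality by integrating \eqref{eq:ineq-max} against any $\mu\in\mathcal{Q}$ and invoking Theorem~\ref{thm:main.rep}. The only cosmetic difference is that the paper chains $\phi(f)\geq\phi_{\max}(f)\geq\sup_{\mu\in\mathcal{Q}}\int f\,\ud\mu$ and then uses the theorem to collapse the chain, whereas you apply the theorem first and compare $\phi_{\max}$ directly to $\max_{\mu\in\mathcal{Q}}\int f\,\ud\mu$; both are the same argument up to bookkeeping.
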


\begin{proof}
Let $\mu\in\mathcal Q$.
Using that $\underline{\pi}^i=\overline{\pi}^i$, we get
\[
\mu\circ{\max}^{-1}((-\infty,i]) = \mu(A^i) = \overline{\pi}^i = \nu_{\max}((-\infty,i])
\]
for all $i\in\mathbb{R}$. 
Hence, it follows that $\mathcal{Q}$ has the form given in \eqref{eq:Q-max}.
	
Let us now define
\[
\phi_{\max} (f) = \inf\Big\{ \sum_{j=1}^d \int_{\mathbb{R}} f_j\,\ud\nu_j + \int_\mathbb{R} g\,\ud\nu_{\max} : 
	f_1,\dots,f_d,g \Big\},
\]
where $f_1,\dots,f_d,g$ satisfy inequality \eqref{eq:ineq-max}.
We want to show that $\phi(f)=\phi_{\max}(f)$.
On the one hand, notice that the right hand side is smaller than the left hand side.
Indeed, for all $(f_1,\dots,f_d,a) \in \Theta(f)$, we can define 
\[
g:=\sum_{i\in I} a^i\1_{(-\infty,i]}
\] 
such that 
\[ 
\int_{\mathbb{R}} g\,\ud\nu_{\max} = \sum_{i\in I} a^i\overline{\pi}^i
	\quad\text{and}\quad 
g(\max x) = \sum_{i\in I} a^i \1_{A^i}(x). 
\]
On the other hand, let $f_1,\dots,f_d,g$ be such that
\[
f_1(x_1)+\dots+f_d(x_d)+g(\max x) \geq f(x), \quad\text{for all }x\in\mathbb{R}^d,
\]
then, using the structure of the set $\mathcal Q$, we have
\begin{align*}
\sum_{j=1}^d \int_{\mathbb{R}} f_j\,\ud\nu_j + \int_\mathbb{R} g\,\ud\nu_{\max}
	&= \int_{\mathbb{R}^d}\big\{ f_1(x_1)+\dots+f_d(x_d)+g(\max x) \big\} \,\mu(\ud x)\\
	&\geq \int_{\mathbb{R}^d}f(x) \,\mu(\ud x).
\end{align*}
Therefore, taking the infimum and the supremum on the two sides of the inequality above, and using the conclusion from the first part, we get that
\[ 
\phi(f) \ge \phi_{\max}(f) \ge \sup_{\mu\in\mathcal{Q}}\int_{\mathbb{R}^d}f \,\ud\mu.
\]
Theorem \ref{thm:main.rep} yields now that all inequalities are actually equalities.
\end{proof}

Next, we provide another relaxation of the duality in \eqref{eq:duality-ii} which follows along the same lines of reasoning as \eqref{eq:duality-i}.
In particular, this can be interpreted again as a pricing-hedging duality, where the superhedging problem involves uncertainty both in the marginals and in the joint distribution, while the hedging strategy takes into account bid and ask prices on single-asset options and the trading of multi-asset digital options.

Let us fix $\underline{\nu}_j, \overline{\nu}_j\in ca^+_1(\mathbb{R})$ for each $j=1,\dots,d$, such that $\overline{\nu}_j$ first-order stochastically dominates $\underline{\nu}_j$. 
Recall that $\underline{\nu}_j\preceq_1\overline{\nu}_j$ in the first order stochastic dominance if $\underline{\nu}_j(-\infty,t]\ge \overline{\nu}_j(-\infty,t]$ for all $t\in\mathbb{R}$.
Let $f\colon\mathbb{R}^d\to\mathbb{R}$ be a cost or payoff function and define the set
\[
	\Theta_1 (f)
		:= \Big\{ (f_1,g_1,\dots,f_d,g_d,a): \sum_{j=1}^d \big(f_j(x_j)-g_j(x_j))+\sum_{i\in I} a^i1_{A^i}(x)\geq f(x), \forall x\in\mathbb{R}^d \Big\},
\]
where $f_j,g_j\colon\mathbb{R}\to\mathbb{R}$ are non-decreasing, bounded and continuous functions and $a^i\leq0$.
Define
\[
	\pi(f_1,g_1,\dots,f_d,g_d,a) 
		:= \sum_{j=1}^d\Big( \int_{\R}f_j\,\ud\overline{\nu}_j -\int_{\R}g_j\,\ud\underline{\nu}_j \Big) + \sum_{i\in I}a^i\underline{\pi}^i,
\]
for all $(f_1,g_1,\dots,f_d,g_d,a) \in \Theta_1(f)$ and further define the functional
\[
	\phi_1(f) := \inf \big\{ \pi(f_1,g_1,\dots,f_d,g_d,a) : (f_1,g_1,\dots,f_d,g_d,a) \in \Theta_1(f) \big\}. 
\]
Moreover, consider the set of measures
\[
	\mathcal{Q}_1 
		:= \Big\{ \mu\in ca^+_1(\mathbb{R}^d): \overline{\nu}_j\preceq_1\mu_j\preceq_1\underline{\nu}_j	
					\text{ and } \underline{\pi}_i\leq\mu(A^i) \mbox{ for all }i,j \Big\}.
\]
Then the following holds.

\begin{proposition}
Let $f\colon\mathbb{R}^d\to\mathbb{R}$ be an upper semicontinuous and bounded function.
Then, if $\phi_1(\varepsilon)>0$ for every $\varepsilon>0$, it holds
\[
	\phi_1(f) = \sup_{\mu\in\mathcal{Q}_1} \int_{\mathbb{R}^d} f\,\ud\mu. 
\]
\end{proposition}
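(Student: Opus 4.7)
The plan is to mirror the three-step strategy of Theorem~\ref{thm:main.rep} for the sublinear functional $\phi_1$, using the representation result \eqref{eq:limmed-entropic} and then computing the conjugates explicitly.

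First, I would check that $\phi_1$ is sublinear and increasing on $U_b$. Positive homogeneity and subadditivity follow, as in the proof of Theorem~\ref{thm:main.rep}, from $\Theta_1(\lambda f)=\lambda\,\Theta_1(f)$ and $\Theta_1(f)+\Theta_1(g)\subseteq \Theta_1(f+g)$; monotonicity is immediate. The key points requiring the new sign/monotonicity constraints are that the sum of two non-decreasing continuous bounded functions is again non-decreasing continuous bounded, and that the sum of two nonpositive reals remains nonpositive. Next, I would show $\phi_1(m)=m$ for every $m\in\R$. The upper bound follows by taking $f_1\equiv m$, all other $f_j,g_j\equiv 0$ and $a=0$, which lies in $\Theta_1(m)$ and yields $\pi=m$. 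For the lower bound, if $\phi_1(m)<m-\varepsilon$ for some $\varepsilon>0$, one can subtract the constant $m-\varepsilon$ from $f_1$ (preserving monotonicity, continuity and boundedness) to produce an element of $\Theta_1(\varepsilon)$ with $\pi\le 0$, contradicting the standing assumption $\phi_1(\varepsilon)>0$. Real-valuedness of $\phi_1$ follows from the same construction together with the monotonicity $\phi_1(f)\le\phi_1(\|f\|_\infty)=\|f\|_\infty$.

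The central analytic step is continuity from above on $C_b$, i.e.\ $\phi_1(f^n)\downarrow 0$ when $f^n\downarrow 0$ in $C_b$. Fix $\varepsilon>0$ and set $M:=\sup_x|f^1(x)|$. By Dini's lemma there exists a compact box $K=[-l,l]^d$ and $n_0$ with $f^n\le \varepsilon$ on $K$ for $n\ge n_0$, while $f^n\le M\1_{K^c}$ off $K$. The main task is to hedge the indicator $M\1_{K^c}$ using only non-decreasing continuous bounded building blocks. For each coordinate $j$ I would fix continuous non-decreasing approximations $h_l^+,h_l^-\colon\R\to[0,1]$ with $h_l^+=0$ on $(-\infty,l],\, h_l^+=1$ on $[l+1,\infty)$ and $h_l^-=0$ on $(-\infty,-l],\, h_l^-=1$ on $[-l+1,\infty)$, and take $f_j:=M(1+h_l^+)$, $g_j:=Mh_l^-$. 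Then $f_j-g_j$ equals $M$ on $\{|x_j|>l\}$ and $0$ on $[-l+1,l]$, so $\sum_j(f_j-g_j)(x_j)\ge M\1_{K^c}(x)$ pointwise. Combined with the constant hedge $(\varepsilon,0,\dots,0)$ for the bound on $K$, this produces an element of $\Theta_1(f^n)$ whose total cost is bounded by
\[
\varepsilon + M\sum_{j=1}^d\big(\underline{\nu}_j((-\infty,-l+1))+\overline{\nu}_j([l,\infty))\big),
\]
which can be made $\le 2\varepsilon$ by choosing $l$ large, using that $\underline{\nu}_j,\overline{\nu}_j$ are probability measures.

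Finally, I would compute the conjugate $\phi_1^\ast$. For $\mu\in\mathcal{Q}_1$, integrating the pointwise inequality defining $\Theta_1(f)$ against $\mu$ and using (i) $\int f_j\,\ud\overline{\nu}_j\ge\int f_j\,\ud\mu_j$ and $\int g_j\,\ud\underline{\nu}_j\le\int g_j\,\ud\mu_j$ (this is where $f_j,g_j$ non-decreasing meets first-order dominance) and (ii) $a^i\underline{\pi}^i\ge a^i\mu(A^i)$ (since $a^i\le 0$ and $\underline{\pi}^i\le\mu(A^i)$), one obtains $\pi(f_1,g_1,\dots)\ge\int f\,\ud\mu$, hence $\phi_1^\ast(\mu)\le 0$, so $\phi_1^\ast(\mu)=0$. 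Conversely, for $\mu\notin\mathcal{Q}_1$ the conjugate is $+\infty$, argued case by case as in Step~3 of Theorem~\ref{thm:main.rep}: if $\mu$ is not a probability, use $\phi_1(m)=m$ together with a scaling argument; if some marginal violates $\overline{\nu}_j\preceq_1\mu_j$ or $\mu_j\preceq_1\underline{\nu}_j$, exhibit a non-decreasing continuous bounded $h$ witnessing the failure (this is exactly the class that characterizes first-order dominance) and either plug it in as $f_j$ or as $g_j$ depending on the direction of the violation; if $\mu(A^i)<\underline{\pi}^i$ for some $i\in I$, approximate $-\1_{A^i}$ from below by continuous functions $f^n$ and use the hedge $a^i=-1$, combined with dominated convergence and scaling. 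Once both conjugates are identified, the representation result \eqref{eq:limmed-entropic} (together with the equality $\phi^\ast_{C_b}=\phi^\ast_{U_b}$ automatic from the fact that the ``$0$'' equality on $\mathcal{Q}_1$ was actually proved against all of $U_b$) yields
\[
\phi_1(f)=\max_{\mu\in ca^+}\Big\{\int f\,\ud\mu-\phi_1^\ast(\mu)\Big\}=\sup_{\mu\in\mathcal{Q}_1}\int_{\R^d}f\,\ud\mu.
\]
The main obstacle I anticipate is the hedging construction in the continuity-from-above step: the requirement that $f_j$ and $g_j$ be \emph{non-decreasing} and continuous rules out the straightforward choice $f_j=\sup|f^1|\1_{[-l,l]^c}$ used in Theorem~\ref{thm:main.rep}, and forces the decomposition of a two-sided tail indicator into a difference of two monotone continuous building blocks as above.
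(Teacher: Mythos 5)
Your proof follows the same route as the paper's (sketched) argument: establish sublinearity and monotonicity of $\phi_1$, show $\phi_1(m)=m$, verify continuity from above on $C_b$, compute the conjugate, and invoke the representation result \eqref{eq:limmed-entropic}. Your treatment of the conjugate computation is actually more explicit than the paper's sketch, which only carries out the direction $\phi_1^\ast(\mu)=0$ for $\mu\in\mathcal{Q}_1$ (the paper's Step~2) and refers to Theorem~\ref{thm:main.rep} for the $+\infty$ direction. Your identification of the decomposition of a two-sided tail indicator into a difference of two monotone continuous bounded functions as the key new ingredient (forced by the first-order dominance constraint) is exactly the same insight the paper uses in its Step~3.

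There is, however, a small but genuine slip in your hedging construction that you should fix. You place the ramp of $h_l^+$ on $[l,l+1]$, i.e., \emph{outside} the box $K=[-l,l]^d$. With that choice, for $t\in(l,l+1)$ one has $g_j(t)=M$ and $f_j(t)=M(1+h_l^+(t))<2M$, so $f_j(t)-g_j(t)<M$; yet any point $x$ with $x_j\in(l,l+1)$ and all other coordinates in $[-l+1,l]$ lies in $K^c$, so $M\1_{K^c}(x)=M$ and the claimed inequality $\sum_j\bigl(f_j(x_j)-g_j(x_j)\bigr)\ge M\1_{K^c}(x)$ fails. (Your left-hand ramp on $[-l,-l+1]$ is placed correctly; only the right-hand one is off.) The fix is simply to place both ramps \emph{inside} $[-l,l]$, e.g.\ have $h_l^+=0$ on $(-\infty,l-1]$ and $h_l^+=1$ on $[l,\infty)$ — this is precisely what the paper's choice
$f_j(t)=M\bigl(1+0\vee(t+1-m)\wedge 1\bigr)$, $g_j(t)=M\bigl(0\vee(t+m)\wedge 1\bigr)$
does, ramping on $[m-1,m]$ and $[-m,-m+1]$ respectively, which gives $f_j-g_j\ge M\1_{[-m,m]^c}$ pointwise. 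With that adjustment your cost bound goes through unchanged (up to the harmless shift from $l$ to $l-1$ in the tail mass), and the rest of your argument is correct.
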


\begin{proof} 
The proof follows along the same lines as the proof of Theorem \ref{thm:main.rep}, hence we only provide a sketch.
To start with, one can check that $\phi_1\colon U_b\to\mathbb{R}\cup\{-\infty\}$ is a sublinear and monotone functional which satisfies $\phi_1(m)\leq m$ for all $m\in\mathbb{R}$.

\textit{Step 1}. 
Analogously to Theorem \ref{thm:main.rep}, it follows that $\phi_1(m)=m$ for all $m\in\mathbb{R}$.

\textit{Step 2}. 
Recall that for two probabilities $\nu,\nu'\in ca^+_1(\mathbb{R})$, one has $\nu \preceq_1 \nu^\prime$ if and only if $\int_{\mathbb{R}} f\,\ud\nu \leq\int_{\mathbb{R}} f\,\ud\nu^\prime$ for every non-decreasing bounded continuous function $f\colon\mathbb{R}\to\mathbb{R}$, which is a straightforward application of integration by parts.
In particular, if $f_i,g_i\colon\mathbb{R}\to\mathbb{R}$ are non-decreasing bounded continuous functions which satisfy 
\[
	\sum_{j=1}^d \big(f_j(x_j)-g_j(x_j))+\sum_{i\in I} a^i1_{A^i}(x)\geq f(x)\quad \mbox{for all }x\in\mathbb{R}^d,
\]
and $\mu\in ca^+_1(\mathbb{R}^d)$ is such that $\underline{\nu}_j\preceq_1\mu_j\preceq_1\overline{\nu}_j$ for all $1\le j\le d$ and $\underline{\pi}_i\leq\mu(A^i)$ for all $i\in I$, then it holds
\begin{align*}
\int_{\mathbb{R}^d} f\,\ud\mu
	&\le \sum_{j=1}^d\int_{\mathbb{R}} (f_j-g_j)\,\ud\mu_j +\sum_{i\in I}  a^i\mu(A^i)
	\leq \sum_{j=1}^d\int_{\mathbb{R}} f_j\,\ud\overline{\nu}_i -\int_{\mathbb{R}} g_j\,\ud\underline{\nu}_i 	+\sum_{i\in I}  a^i\underline{\pi}^i.
	\end{align*}
Since $f_i,g_i$ and $a^i$ were arbitrary, it follows that $\phi_1(f)\geq \int_{\mathbb{R}^d} f\,\ud\mu$.

\textit{Step 3}.
Let $(f^n)$ be a sequence of bounded continuous functions which decreases pointwise to 0.
For $\varepsilon>0$, fix $m\in\mathbb{N}$ such that
\[ 
	\max\big\{\overline{\nu}_j([m-1,\infty)),\underline{\nu}_j((-\infty,-m+1])\big\}
		\leq\frac{\varepsilon}{2d\sup_x|f^1(x)|} 
\]
for every $j=1,\dots,d$, and define the increasing functions
\[ 
	f_j(t):=\sup_{x\in\mathbb{R}^d}|f^1(x)| \cdot \big(1+0\vee(t+1-m)\wedge 1\big)
		\ \ \text{and}\ \
	g_j(t):=\sup_{x\in\mathbb{R}^d}|f^1(x)| \cdot \big( 0\vee(t+m)\wedge 1\big). 
\]
Then 
\[
	f_j-g_j\geq \sup_{x\in\mathbb{R}^d}|f_1(x)|1_{[-m,m]^c}
		\quad\mbox{and}\quad
	\int_{\mathbb{R}} f_j\,\ud\overline{\nu}_j-\int_{\mathbb{R}} g_j\,\ud\underline{\nu}_j\leq\frac{\varepsilon}{d},
\]
from which it follows exactly as in the proof of Theorem \ref{thm:main.rep} that $\phi_1(f_n)\downarrow 0=\phi_1(0)$.
\end{proof}

\section{An explicit solution for $f=1_B$}
\label{sec:solution.box}

The optimal transport dualities presented in \cref{thm:main.rep} become really interesting when the primal\footnote{In some parts of the literature on optimal transportation this is called the \textit{primal} problem, see \textit{e.g.} \citet{Villani_2003}, while in other parts this is called the \textit{dual} problem, see \textit{e.g.} \citet{Kellerer_1984}.} problem $\phi(f)$ or $\phi_0(f)$ admits an explicit solution.
Although we cannot expect to deduce an explicit solution for general functions $f$, we will show that $\phi_0(f)$ admits an explicit solution when $f=1_B$, for rectangular sets $B\subset \R^d$.

In order to ease the presentation of the main result in this section we consider in the following the case $d=2$ for a box $B=(-\infty,B_1]\times(-\infty,B_2]$ and finite $I$, \textit{i.e.} $I=\{1,\dots,n\}$. The proofs for the higher-dimensional case ($d>2$) can be obtained by analogous arguments.

\begin{theorem}
\label{thm:sharp.bounds}
The following holds:
\[ 
\max_{\mu\in\mathcal{Q}_0} \mu(B)
	= \min\Big\{ \nu_1((-\infty,B_1]),\nu_2((-\infty,B_2]),
				 \min_{i\in I} \Big\{\bar{\pi}^i + \nu_1((A^i_1,B_1]))+\nu_2((A^i_2,B_2])\Big\}\Big\}. 
\]
\end{theorem}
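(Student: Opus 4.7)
The plan is to apply Theorem~\ref{thm:main.rep} to the bounded upper semicontinuous function $f=\1_B$ (it is u.s.c.\ because $B$ is closed), which by \eqref{eq:duality-i} identifies the primal value $\max_{\mu\in\mathcal Q_0}\mu(B)$ with the dual value $\phi_0(\1_B)$, and then to sandwich both quantities by the claimed right-hand side, which I will denote by $M$.

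For the dual upper bound $\phi_0(\1_B)\le M$ I would exhibit one element of $\Theta_0(\1_B)$ realizing each of the $2+|I|$ terms whose minimum defines $M$. The two marginal terms come from $(f_1,f_2,a)=(\1_{(-\infty,B_1]},0,0)$ and its mirror image, yielding cost $\nu_j((-\infty,B_j])$ for $j=1,2$. For each $i\in I$ I take $f_1=\1_{(A_1^i,B_1]}$, $f_2=\1_{(A_2^i,B_2]}$, $a^i=1$, and $a^j=0$ for $j\ne i$; feasibility reduces to the observation that any $x\in B\setminus A^i$ must satisfy $x_1>A_1^i$ or $x_2>A_2^i$, so at least one of the two indicator functions is $1$ there. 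The associated cost is $\overline\pi^i+\nu_1((A_1^i,B_1])+\nu_2((A_2^i,B_2])$, and infimizing over these candidates gives $\phi_0(\1_B)\le M$.

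For the matching inequality $\max_{\mu\in\mathcal Q_0}\mu(B)\ge M$ the plan is to construct an explicit $\mu^\star\in\mathcal Q_0$ with $\mu^\star(B)=M$. The two marginal cases (when $M=\nu_j((-\infty,B_j])$ for some $j$) are handled directly by coupling appropriate restrictions of $\nu_1$ and $\nu_2$ supported in $B$. Otherwise, let $i^\star\in I$ attain the minimum. Partition $B$ into the three disjoint rectangles $B\cap A^{i^\star}$, $R_1:=(A_1^{i^\star},B_1]\times(-\infty,A_2^{i^\star}]$ and $R_2:=(-\infty,B_1]\times(A_2^{i^\star},B_2]$, and place mass $\overline\pi^{i^\star}$ on $B\cap A^{i^\star}$, mass $\nu_1((A_1^{i^\star},B_1])$ on $R_1$ with first marginal $\nu_1|_{(A_1^{i^\star},B_1]}$, and mass $\nu_2((A_2^{i^\star},B_2])$ on $R_2$ with second marginal $\nu_2|_{(A_2^{i^\star},B_2]}$; the remaining couplings are fixed comonotonically. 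The total mass equals $M$ and is entirely supported in $B$, and the fact that $M$ is the minimum provides enough ``room'' in each marginal so that $\mu^\star_j\preceq_0 \nu_j$ for $j=1,2$.

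The main technical obstacle is verifying $\mu^\star(A^j)\le\overline\pi^j$ for every $j\in I\setminus\{i^\star\}$. Decomposing $\mu^\star(A^j)$ into the contributions from the three pieces of $\mu^\star$ and bounding them by the corresponding marginal masses, the desired estimate reduces to the optimality of $i^\star$, i.e.\ to
\[ \overline\pi^{i^\star}+\nu_1((A_1^{i^\star},B_1])+\nu_2((A_2^{i^\star},B_2])\le \overline\pi^j+\nu_1((A_1^j,B_1])+\nu_2((A_2^j,B_2]). \]
The bookkeeping is delicate because the relative ordering of $(A_1^{i^\star},A_2^{i^\star})$ and $(A_1^j,A_2^j)$ produces several geometric subcases, each requiring the comonotone couplings on the three pieces to be chosen compatibly so that the overlaps with $A^j$ are controlled by the right marginal quantities. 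Once this is accomplished, combining the construction with the dual estimate from the second paragraph and Theorem~\ref{thm:main.rep} completes the argument.
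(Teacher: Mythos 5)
Your plan diverges from the paper at the crucial step. The upper bound $\phi_0(\1_B)\le M$ via explicit dual-feasible triples is fine and agrees with what the paper does implicitly. But for the matching inequality the paper stays entirely on the dual side: Lemma~\ref{lem:sharp.bounds.reduction} recursively peels off the rightmost vertical strip of $B$ and reduces the general superhedge to either a marginal strip or a single box plus adjacent strips, yielding $\phi_0(\1_B)\ge M$ directly. You instead try to construct a primal optimizer $\mu^\star\in\mathcal Q_0$, and the construction as described does not work: the comonotone coupling on $B\cap A^{i^\star}$ can violate the constraint $\mu^\star(A^j)\le\overline\pi^j$ for boxes $A^j\subsetneq A^{i^\star}$.

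Concretely, take $d=2$, $\nu_1=\nu_2$ uniform on $[0,4]$, $B=(-\infty,3]^2$, $A^1=(-\infty,1]^2$, $A^3=(-\infty,1.5]^2$, $A^2=(-\infty,2]^2$ with $\overline\pi^1=0.1$, $\overline\pi^3=0.05$, $\overline\pi^2=0.2$. Then $M=\overline\pi^2+\nu_1((2,3])+\nu_2((2,3])=0.7$ is attained at $i^\star=2$, and the other two candidate values $0.8$ and $1.1$ and the marginal values $0.75$ are all larger. Your construction puts mass $\overline\pi^2=0.2$ on $B\cap A^2=(-\infty,2]^2$ ``comonotonically'', which with marginals $\nu_1|_{(1.2,2]}$, $\nu_2|_{(1.2,2]}$ gives $\mu^\star(A^3)=\min\bigl(\nu_1((1.2,1.5]),\nu_2((1.2,1.5])\bigr)=0.075>0.05=\overline\pi^3$; and one checks that \emph{no} comonotone coupling of sub-measures of $\nu_1,\nu_2$ with total mass $0.2$ in $(-\infty,2]^2$ can satisfy both $\rho_1\le\nu_1$, $\rho_2\le\nu_2$ and $\min(\rho_1((-\infty,1.5]),\rho_2((-\infty,1.5]))\le 0.05$, since $\nu_j((1.5,2])=0.125<0.15=\overline\pi^2-\overline\pi^3$. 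A feasible $\mu^\star$ does exist (e.g.\ splitting the $0.2$ mass into the two legs of $A^2\setminus A^3$ avoiding $A^3$ entirely), but it is non-comonotone and depends on the whole collection $\{A^j\}$, so the step ``the desired estimate reduces to the optimality of $i^\star$'' is not a routine bookkeeping exercise --- it is a genuine gap. This is precisely the difficulty that the paper's dual reduction is designed to sidestep.
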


\begin{figure}[H]
\centering
\begin{minipage}{.33\textwidth}

\begin{tikzpicture}[scale=.85]

\draw[thick,->] (0,0) -- (5.5,0);
\draw[thick,->] (0,0) -- (0,5.5);

\draw[thick] (0,0) -- (4,0) -- (4,4) -- (0,4) -- (0,0);
\draw (3.5,3.5) node {$B$};

\fill[pattern=north west lines, pattern color=blue!30] (0,0) rectangle (5.25,4);

\end{tikzpicture}
\end{minipage}%
\begin{minipage}{.33\textwidth}

\begin{tikzpicture}[scale=.85]

\draw[thick,->] (0,0) -- (5.5,0);
\draw[thick,->] (0,0) -- (0,5.5);

\draw[thick] (0,0) -- (4,0) -- (4,4) -- (0,4) -- (0,0);
\draw (3.5,3.5) node {$B$};

\fill[pattern=north west lines, pattern color=blue!30] (0,0) rectangle (4,5.25);

\end{tikzpicture}
\end{minipage}%
\begin{minipage}{.33\textwidth}

\begin{tikzpicture}[scale=.85]

\draw[thick,->] (0,0) -- (5.5,0);
\draw[thick,->] (0,0) -- (0,5.5);

\draw[thick] (0,0) -- (4,0) -- (4,4) -- (0,4) -- (0,0);
\draw (3.5,3.5) node {$B$};

\draw[thick] (0,0) -- (0,4.75) -- (3,4.75) -- (3,0) -- (0,0);
\draw (2.5,4.5) node {$A$};

\fill[pattern=north west lines, pattern color=blue!30] (0,0) rectangle (3,4.75);
\fill[pattern=north east lines, pattern color=red!30] (3,0) -- (4,0) -- (4,5.25) -- (3,5.25) -- (3,0);

\end{tikzpicture}
\end{minipage}%
\caption{A graphical representation of Theorem \ref{thm:sharp.bounds}.}
\label{Fig:main-theorem}
\end{figure}
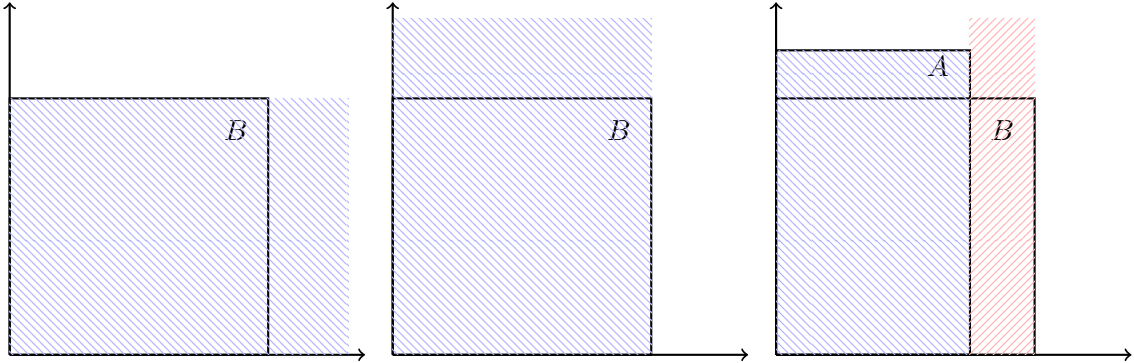

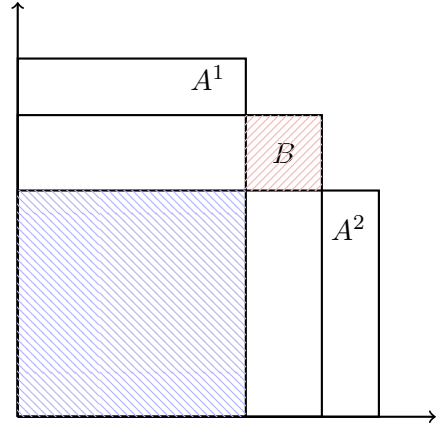
\begin{wrapfigure}{r}{.375\textwidth}
\begin{tikzpicture}[scale=1.]
\draw[thick,->] (0,0) -- (5.5,0);
\draw[thick,->] (0,0) -- (0,5.5);

\draw[thick] (0,0) -- (4,0) -- (4,4) -- (0,4) -- (0,0);
\draw (3.5,3.5) node {$B$};

\draw[thick] (0,0) -- (0,4.75) -- (3,4.75) -- (3,0) -- (0,0);
\draw (2.5,4.5) node {$A^1$};

\draw[thick] (0,0) -- (4.75,0) -- (4.75,3) -- (0,3) -- (0,0);
\draw (4.35,2.5) node {$A^2$};

\fill[pattern=north west lines, pattern color=blue!30] (0,0) rectangle (3,3);
\fill[pattern=north east lines, pattern color=red!30] (3,3) rectangle (4,4);
\end{tikzpicture}
\caption{Non-optimality of super-hedging with two boxes.}
\label{Fig:intuition-2-boxes}
\end{wrapfigure}

Figure \ref{Fig:main-theorem} offers a graphical representation of Theorem \ref{thm:sharp.bounds}.
Let us call `box' an option with payoff $1_B$ with $B\subset\R^2$ and `strip' an option with payoff $1_{J\times\R}$ or $1_{\R\times J}$ with $J\subset\R$.
Then, in the language of mathematical finance, this result states that there are three possible ways to superhedge the box $B$: either using a horizontal strip (left), or a vertical strip (middle), or another box $A$ plus the horizontal and / or vertical strips adjacent to it (right).  

Figure \ref{Fig:intuition-2-boxes} offers an intuitive explanation on why it is not optimal to buy two boxes $A^1$ and $A^2$ in order to superhedge $B$, in the presence of shortselling constraints.
Indeed, in case one buys both $A^1$ and $A^2$, then the 
\begin{tikzpicture} \draw [pattern=north west lines, pattern color=blue!30] (0,0) rectangle (.5,.25); \end{tikzpicture}
shaded region is bought twice incurring unnecessary additional costs, while the 
\begin{tikzpicture} \draw [pattern=north east lines, pattern color=red!30] (0,0) rectangle (.5,.25); \end{tikzpicture}
shaded region is still not hedged.
In order to hedge the latter, a further investment in horizontal and / or vertical strips is required.

\cref{thm:main.rep} applied to $f=1_B$ yields immediately that
\[
\max_{\mu\in\mathcal Q_0} \int 1_B \ud \mu
	= \max_{\mu\in\mathcal Q_0} \mu(B)
	= \phi_0(1_B),
\]
hence we need to show that $\phi_0(1_B)$ admits the following representation:
\[
\phi_0(1_B) = \min\Big\{ \nu_1((-\infty,B_1]),\nu_2((-\infty,B_2]),
				 \min_{i\in I} \Big\{\bar{\pi}^i + \nu_1((A^i_1,B_1]))+\nu_2((A^i_2,B_2])\Big\}\Big\}.
\]

% \begin{wrapfigure}{r}{.5\textwidth}
\begin{figure}[H]
\centering
\begin{tikzpicture}[scale=1.]
\draw[thick,->] (0,0) -- (6,0);
\draw[thick,->] (0,0) -- (0,6);

\draw[thick] (0,0) -- (4,0) -- (4,4) -- (0,4) -- (0,0);
\draw (3.65,3.65) node {$B$};

\draw[thick] (0,0) -- (0,5) -- (2,5) -- (2,0) -- (0,0);
\draw (1.65,4.65) node {$A^1$};

\draw[thick] (0,0) -- (5,0) -- (5,2) -- (0,2) -- (0,0);
\draw (4.65,1.65) node {$A^3$};

\draw[thick] (0,0) -- (3,0) -- (3,3) -- (0,3) -- (0,0);
\draw (2.65,2.65) node {$A^2$};

\fill[pattern=north west lines, pattern color=blue!30] (3,0) -- (4,0) -- (4,5.75) -- (3,5.75) -- (3,0);
\draw (3.5,5) node {$S$};

\draw[thick,-] (-.15,5) -- (0,5);
\draw[thick,-] (-.15,4) -- (0,4);
\draw[thick,-] (-.15,3) -- (0,3);
\draw[thick,-] (-.15,2) -- (0,2);

\draw (-.5,5) node {$d_2^4$};
\draw (-.5,4) node {$d_2^3$};
\draw (-.5,3) node {$d_2^2$};
\draw (-.5,2) node {$d_2^1$};

\draw[thick,-] (5,-.15) -- (5,0);
\draw[thick,-] (4,-.15) -- (4,0);
\draw[thick,-] (3,-.15) -- (3,0);
\draw[thick,-] (2,-.15) -- (2,0);

\draw (5,-.5) node {$d_1^4$};
\draw (4,-.5) node {$d_1^3$};
\draw (3,-.5) node {$d_1^2$};
\draw (2,-.5) node {$d_1^1$};

% \draw[thick,-] (0,-1) -- (2,-1);
\end{tikzpicture}
\caption{The main setting is illustrated in this figure, where $d_1^{3}=B_1$ hence $i_1=3$.}
\label{Fig:setting}
\end{figure}
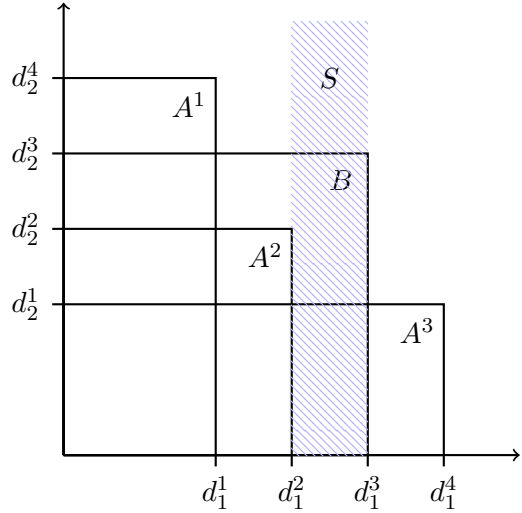
% \end{wrapfigure}

Let us introduce some notation now that will be used in the subsequent proofs; it is illustrated in Figure \ref{Fig:setting}.
Define $D_j:=\{ A^i_j : i\in I \}\cup\{ B_j \}$, for $j=1,2$, and let $D_j=\{d_j^k : k=1,\dots,m_j\}$ be an enumeration such that $d_j^1<d_j^2<\dots<d_j^{m_j}$. Further, define
\[ 
F_j^0:=(-\infty,d_j^1], \quad
F_j^i:=[d_j^i,d_j^{i+1}) \ \text{ for } i=1,\dots,m_j-1,\quad  \text{ and } \quad 
F^{m_j}_j:=(d_j^{m_j},\infty)
\]
for $j=1,2$.
Moreover, let $i_1$ be such that $d_1^{i_1}=B_1$.
In a first step, notice that in the definition of $\phi_0(\1_B)$ we can and will restrict ourselves, without loss of generality, to functions $f_j$ of the form 
\[
f_j(x):=\sum_{i=1}^{m_j} f_j^i \1_{F_j^i}(x),
	\quad \text{where $f_j^i$ are positive constants.}
\] 
We will refer to the functions $f_1$ as ``vertical marginals'' and to the functions $f_2$ as ``horizontal marginals''.

\begin{lemma}
\label{lem:sharp.bounds.reduction}
Let $S:=F_1^{i_1-1}\times\mathbb{R}$.
Then
\begin{align}
\label{eq:psi.equals.min.s}
\phi_0(\1_B) = \min_{s\in\{0,1\}} \Big\{ s\nu_1(F_1^{i_1-1}) + s\phi_0(\1_{B\setminus S}) + (1-s)\eta(\1_B) \Big\},
\end{align}
where
\begin{align}
\label{eq:eta.equals.min}
\eta(\1_B) :=& \, \inf\Big\{ \pi(0,f_2,a) :  f_2(x_2)+\sum_{i\in I} a^i\1_{A^i}(x)= 1
	\ \text{ for all } x\in B \text{ and } f_2,a^i\geq 0 \Big\} \nonumber \\
		   =& \, \min\Big\{ \nu_2((-\infty,B_2]), \min_{i\in I: B_1\leq A_1^i} \Big\{\bar{\pi}^i +\nu_2((A^i_2,B_2])\Big\} \Big\}.
\end{align}
\end{lemma}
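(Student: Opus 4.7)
The plan is to prove the two equalities in the statement separately: first the reduction identity \eqref{eq:psi.equals.min.s}, and then the closed-form evaluation \eqref{eq:eta.equals.min} of $\eta(\1_B)$.

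For the reduction identity, the inequality $\phi_0(\1_B) \leq \eta(\1_B) \wedge (\nu_1(F_1^{i_1-1}) + \phi_0(\1_{B\setminus S}))$ is established by exhibiting two explicit superhedging strategies. Any admissible $(0, f_2, a)$ for the $\eta(\1_B)$-problem satisfies $f_2 + \sum_i a^i \1_{A^i} = 1 \geq \1_B$ on $B$ and $\geq 0 = \1_B$ off $B$ (by nonnegativity), hence lies in $\Theta_0(\1_B)$, giving $\phi_0(\1_B) \leq \eta(\1_B)$. Second, concatenating the vertical strip $\1_{F_1^{i_1-1}}$ (which pointwise dominates $\1_{B \cap S}$ since $B \cap S = F_1^{i_1-1} \times (-\infty, B_2]$) with an $\varepsilon$-optimal superhedge $(g_1, g_2, b) \in \Theta_0(\1_{B\setminus S})$ yields the strategy $(\1_{F_1^{i_1-1}} + g_1, g_2, b) \in \Theta_0(\1_B)$ of cost $\nu_1(F_1^{i_1-1}) + \pi(g_1, g_2, b)$, proving the second upper bound. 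For the reverse inequality, I would invoke the duality $\phi_0(\1_B) = \max_{\mu \in \mathcal Q_0} \mu(B)$ from Theorem \ref{thm:main.rep} and construct a primal-optimal measure $\mu^\ast$ attaining the right-hand side. In the case $\nu_1(F_1^{i_1-1}) + \phi_0(\1_{B \setminus S}) \leq \eta(\1_B)$, I would take an optimal $\mu^{\ast\ast}$ for $\phi_0(\1_{B\setminus S})$ (chosen supported on $B\setminus S$) and set $\mu^\ast = \mu^{\ast\ast} + \sigma$, where $\sigma$ is a sub-measure on $F_1^{i_1-1} \times (-\infty, B_2]$ with $x_1$-marginal $\nu_1|_{F_1^{i_1-1}}$ and $x_2$-marginal placed within the remaining slack $\nu_2|_{(-\infty,B_2]} - \mu^{\ast\ast}_2$; the hypothesis together with $\eta(\1_B) \leq \nu_2((-\infty, B_2])$ guarantees enough room for $\sigma_2$. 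In the complementary case $\eta(\1_B) \leq \nu_1(F_1^{i_1-1}) + \phi_0(\1_{B\setminus S})$, the upper bound already forces $\phi_0(\1_B) \leq \eta(\1_B)$, and I would build $\mu^\ast$ directly from the optimiser of the explicit formula for $\eta(\1_B)$, either via a horizontal measure with $x_2$-marginal $\nu_2|_{(-\infty, B_2]}$ or via a measure combining a tight box $A^i$ with an additional horizontal contribution on $(A^i_2, B_2]$.

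For the closed-form evaluation of $\eta(\1_B)$, the proof is a finite-dimensional LP vertex analysis. The equality $f_2(x_2) + \sum_i a^i \1_{A^i}(x_1, x_2) = 1$ on $B$, viewed as a function of $x_1$ at fixed $x_2 \leq B_2$, must be constant in $x_1$ on $(-\infty, B_1]$; but the sum $\sum_i a^i \1_{x_1 \leq A^i_1} \1_{x_2 \leq A^i_2}$ jumps by $-a^i \1_{x_2 \leq A^i_2}$ at every $x_1 = A^i_1$, so each $A^i_1 < B_1$ forces $a^i \1_{x_2 \leq A^i_2} = 0$ for all $x_2 \leq B_2$, hence $a^i = 0$. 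The equality then determines $f_2(x_2) = 1 - \sum_{i: A^i_1 \geq B_1} a^i \1_{x_2 \leq A^i_2}$ on $(-\infty, B_2]$, while one sets $f_2 \equiv 0$ outside to minimise cost. The cost becomes the linear function $\nu_2((-\infty, B_2]) + \sum_{i: A^i_1 \geq B_1} a^i (\overline{\pi}^i - \nu_2((-\infty, \min(A^i_2, B_2)]))$ of $a$ on the polytope $\{ a \geq 0: \sum_{i: A^i_2 \geq x_2} a^i \leq 1, \ \forall x_2 \leq B_2 \}$, whose vertices are $a = 0$ and the unit vectors $e_i$ for $i$ with $A^i_1 \geq B_1$. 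The corresponding costs are $\nu_2((-\infty, B_2])$ and $\overline{\pi}^i + \nu_2((A^i_2, B_2])$ respectively, and the minimum over vertices is exactly the claimed formula.

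The main obstacle is the construction of the primal-optimal measure $\mu^\ast$ in the reverse inequality of the reduction identity, where one must simultaneously respect the marginal-dominance constraints $\mu^\ast_j \preceq_0 \nu_j$ and the box constraints $\mu^\ast(A^i) \leq \overline{\pi}^i$. For the added measure $\sigma$ the box constraints require $\sigma(A^i) \leq \overline{\pi}^i - \mu^{\ast\ast}(A^i)$, which depends sensitively on the placement of $\sigma_2$; the assumption $\nu_1(F_1^{i_1-1}) + \phi_0(\1_{B\setminus S}) \leq \eta(\1_B) \leq \overline{\pi}^i + \nu_2((A^i_2, B_2])$ for every $i$ with $A^i_1 \geq B_1$ is used precisely to provide just enough slack to place $\sigma_2$ compatibly with the boxes tight for $\mu^{\ast\ast}$.
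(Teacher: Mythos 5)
Your analysis of $\eta(\1_B)$ is correct: the argument that $a^i=0$ is forced for $A^i_1<B_1$ (because the equality on $B$ cannot tolerate a jump in $x_1$), the reduction of the remaining problem to a linear program over the simplex $\{a\ge0:\sum_i a^i\le1\}$, and the evaluation at the vertices $0$ and $e_i$ all check out, and they amount to the same finite LP that the paper solves by a recursive ``peel off one box at a time'' argument. The first half of the reduction identity (exhibiting a strategy for each $s\in\{0,1\}$ to get $\phi_0(\1_B)\le\text{RHS}$) also matches what the paper does.

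The reverse inequality is where the proposal departs from the paper and where the gap lies. The paper stays entirely on the dual (hedging) side: it interchanges minima to write $\phi_0(\1_B)=\min_{s\in[0,1]}\{s\nu_1(F_1^{i_1-1})+\phi_0^{\setminus i_1-1}(\1_B-s\1_S)\}$, fixes an optimal strategy for $\phi_0^{\setminus i_1-1}(\1_B-s\1_S)$ with $f_1^{i_1-1}=0$, and then \emph{decomposes} that strategy --- by a case distinction on $t:=\sum_{i:B\subset A^i}a^i$ and an inductive peeling in the second coordinate --- into an admissible $(1-s)$-scaled strategy for $\eta(\1_B)$ plus an admissible $s$-scaled strategy for $\phi_0(\1_{B\setminus S})$, and finally uses affineness in $s$ to reduce $[0,1]$ to $\{0,1\}$. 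No primal measure is ever constructed at this stage. You instead invoke the duality $\phi_0(\1_B)=\max_{\mu\in\mathcal Q_0}\mu(B)$ and propose to build a primal-optimal $\mu^\ast=\mu^{\ast\ast}+\sigma$, but you explicitly flag the placement of $\sigma_2$ as ``the main obstacle'' and do not resolve it. This is not a cosmetic gap: $\sigma$ must simultaneously satisfy $\sigma_2\preceq_0\nu_2-\mu^{\ast\ast}_2$ with a prescribed total mass $\nu_1(F_1^{i_1-1})$ \emph{and} $\sigma(A^i)\le\overline\pi^i-\mu^{\ast\ast}(A^i)$ for every $i$ whose box overlaps $F_1^{i_1-1}\times(-\infty,B_2]$, and several boxes with $A^i_1<B_1$ can intersect that strip simultaneously. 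The single scalar inequality $\nu_1(F_1^{i_1-1})+\phi_0(\1_{B\setminus S})\le\eta(\1_B)$ bounds total mass but does not by itself localize it in a way that respects all the $A^i$-constraints at once; one would need an explicit feasibility argument (\emph{e.g.} a max-flow/Strassen-type argument over the finite grid generated by $D_1\times D_2$). Until that step is supplied, the reverse inequality is not established. It is also worth noting that if the primal construction did go through in full, you would essentially have proved Theorem~\ref{thm:sharp.bounds} directly, bypassing the lemma; the paper's dual decomposition is precisely what lets it avoid constructing the optimal coupling by hand.
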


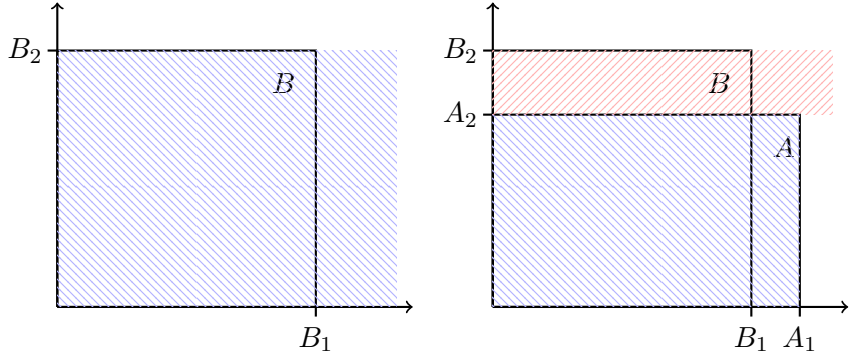
\begin{figure}[H]
\centering
\begin{minipage}{.375\textwidth}

\begin{tikzpicture}[scale=.85]

\draw[thick,->] (0,0) -- (5.5,0);
\draw[thick,->] (0,0) -- (0,4.75);

\draw[thick] (0,0) -- (4,0) -- (4,4) -- (0,4) -- (0,0);
\draw (3.5,3.5) node {$B$};

\draw[thick,-] (4,-.15) -- (4,0);
\draw (4,-.5) node {$B_1$};

\draw[thick,-] (-.15,4) -- (0,4);
\draw (-.5,4) node {$B_2$};

\fill[pattern=north west lines, pattern color=blue!30] (0,0) rectangle (5.25,4);

\end{tikzpicture}
\end{minipage}%
\begin{minipage}{.375\textwidth}

\begin{tikzpicture}[scale=.85]

\draw[thick,->] (0,0) -- (5.5,0);
\draw[thick,->] (0,0) -- (0,4.75);

\draw[thick] (0,0) -- (4,0) -- (4,4) -- (0,4) -- (0,0);
\draw (3.5,3.5) node {$B$};

\draw[thick] (0,0) -- (4.75,0) -- (4.75,3) -- (0,3) -- (0,0);
\draw (4.5,2.5) node {$A$};

\draw[thick,-] (4,-.15) -- (4,0);
\draw (4,-.5) node {$B_1$};

\draw[thick,-] (-.15,4) -- (0,4);
\draw (-.5,4) node {$B_2$};

\draw[thick,-] (4.75,-.15) -- (4.75,0);
\draw (4.75,-.5) node {$A_1$};

\draw[thick,-] (-.15,3) -- (0,3);
\draw (-.5,3) node {$A_2$};

\fill[pattern=north west lines, pattern color=blue!30] (0,0) rectangle (4.75,3);
\fill[pattern=north east lines, pattern color=red!30] (0,3) -- (0,4) -- (5.25,4) -- (5.25,3) -- (0,3);

\end{tikzpicture}
\end{minipage}%
\caption{A graphical representation of the functional $\eta(1_B)$.}
\label{Fig:eta}
\end{figure}

The functional $\eta(1_B)$ is graphically illustrated in Figure \ref{Fig:eta}, and states that there are two ways to superhedge the box $B$ without using the vertical marginals: either using the horizontal strip $1_{\R\times(-\infty,B_2]}$, or using another box $A$ with $A_1\ge B_1$ and, in case $B_2>A_2$, the horizontal strip `above' this box, \textit{i.e.} $1_{\R\times(A_2,B_2]}$.

\begin{proof}
Initially, notice that all optimization problems appearing are finite dimensional linear problems, so that minimizers always exist.

We start by proving \eqref{eq:psi.equals.min.s} and first show that the left hand side is smaller than the right hand side.
Indeed, in case $s=0$, this reduces to the fact that obviously $\phi_0(\1_B)\leq\eta(\1_B)$, since $\phi_0(\1_B)$ is defined as the infimum over a larger set. 
In case $s=1$, let $(f_1,f_2,a)\in\Theta_0(\1_{B\setminus S})$ be optimal --- in the sense that $\pi(f_1,f_2,a)=\phi_0(\1_{B\setminus S})$ --- and notice that one can assume without loss of generality that $f_1^{i_1-1}=0$.
Now define 
\[
\hat{f}_1^i:=
	\begin{cases}
	f_1^i,&\text{if } i \neq i_1-1\\
	1,&\text{else},
	\end{cases}
\]
and it follows that $(\hat{f}_1,f_2,a)\in\Theta_0(\1_B)$. 
By the definition of $\hat{f}_1^i$ it holds that
\[
\pi(\hat{f}_1,f_2,a)
	=\nu_1(F_1^{i_1-1}) + \pi(f_1,f_2,a)
	=\nu_1(F_1^{i_1-1}) + \phi_0(\1_{B\setminus S}),
\]
which shows that $\phi_0(\1_B)\leq \nu_1(F_1^{i_1-1}) + \phi_0(\1_{B\setminus S})$.

In order to prove the reverse inequality, notice that by interchanging two minima it holds
\begin{align}
\label{eq:psi.min.min}
\phi_0(\1_B) = \min_{s\in[0,1]} \Big\{ s\nu_1(F_1^{i_1-1}) + \phi_0^{\setminus i_1-1}(\1_B-s\1_S)\Big\},
\end{align}
where
\[ 
\phi_0^{\setminus i_1-1}(\1_B-s\1_S)
	:=\inf \big\{\pi(f_1,f_2,a) : (f_1,f_2,a)\in\Theta_0(\1_B-s\1_S) \text{ and } 
	f_1^{i_1-1}=0 \big\}. 
\]
Fix some optimal $s$ in \eqref{eq:psi.min.min} and an optimal strategy $(f_1,f_2,a)$ for $\phi_0^{\setminus i_1-1}(\1_B-s\1_S)$.
Since $f_1^{i_1-1}=0$, it follows that 
\[
f_2(x_2)+\sum_{i\in I} a^i\1_{A^i}(x)
	=f_2(x_2)+\sum_{i\in I: B_1\leq A^i_1} a^i\1_{A^i}(x)
	\geq 1-s\quad\text{for all } x\in B\cap S.
\]
Let $t:=\sum_{i\in I : B\subset A^i} a^i$. 
On the one hand, if $t\geq 1-s$, set $\bar{a}^i:=(1-s)a^i/t$ for every $i$ such that $B\subset A^i$, $\bar{a}^i=0$ else, and $\bar{f}_2=0$.
Then $\sum_{i\in I} \bar{a}^i\1_{A^i}(x)=1-s$ for $x\in B$, thus $(0,0,\bar{a})$ is an admissible strategy for $\eta((1-s)\1_B)=(1-s)\eta(\1_B)$.
Further define $\tilde{a}:=a-\bar{a}\geq 0$. 
Then one can check that $(f_1,f_2,\tilde{a})\in\Theta_0(s\1_{B\setminus S})$. 
Therefore
\begin{align}
\phi_0(\1_B)
	&=s\nu_1(F_1^{i_1-1}) + \pi(f_1,f_2,a) \nonumber  \\
	&=s\nu_1(F_1^{i_1-1}) + \pi(f_1,f_2,\tilde{a})+\pi(0,0,\bar{a}) \label{eq:psi(1B).geq.mins} \\
	&\geq \min_{s\in[0,1]} \Big\{ s\nu_1(F_1^{i_1-1}) + s\phi_0(\1_{B\setminus S}) + (1-s)\eta(1_B) \Big\}. \nonumber
\end{align}
Moreover, since the last term is affine in $s$, it follows that the minimum over $s\in[0,1]$ yields the same value as the minimum over $s\in\{0,1\}$.
	
On the other hand, assume that $t<1-s$ and define $\bar{a}^i:=a^i$ for all $i$ such that $B\subset A^i$. 
For notational convenience we assume that $A^i_1\geq B_1$ exactly for $i=1,\dots,m$ and that $B_2> A^\1_2> A^2_2>\dots> A^m_2$; the case where $A^i_2=A^j_2$ for some $i,j\leq m$ works in the same way, but requires additional notation.
Further denote by $k_0$ the index such that $d_2^{k_0}=B_2$ and by $k_i$ the index such that $d_2^{k_i}=A^i_2$, for $i=1,\dots,m$.
Then, for every $i=k_1,\dots,k_0-1$ it needs to hold $f_2^i\geq\bar{f}_2^i:=1-s-t>0$.
Moreover
\begin{align}
\label{eq:induction1}
f_2^i+a^1\geq 1-s-t\quad\text{for }k_2\leq i\leq k_1-1,
\end{align}
\textit{i.e.}~$f_2(x_2)+a^1\geq 1-s-t$ for all $x\in S$ with $x_2\in(A^\1_2,B_2]$.  
Now, there are two possibilities:
\begin{itemize}
\item
If $a^1\geq\bar{a}^1:= 1-s-t$, then set $\bar{f}_2^i:=0$ for $i\leq k_1-1$ and $\bar{a}^i:=0$ for $i=2,\dots,m$. 
Then $(0,\bar{f}_2,\bar{a})$ is an admissible strategy for $\eta(\1_B)$ and $(f_1,\tilde{f}_2,\tilde{a})\in\Theta_0(s\1_{B\setminus S})$, where $\tilde{f}_2:=f_2-\bar{f}_2$ and $\tilde{a}:=a-\bar{a}$.
Hence, it follows from linearity of $\pi$, as in \eqref{eq:psi(1B).geq.mins}, that 
\[
\phi_0(\1_B)\geq \min_{s\in[0,1]}\Big\{ s\nu_1(F_1^{i_1-1}) + s\phi_0(\1_{B\setminus S}) +(1-s)\eta(1_B) \Big\}.
\]
\item
Otherwise, if $\bar{a}^1:=a^1<1-s-t$, define $\bar{f}_2^i:=1-s-t-a^1\leq f_2^i$	for all $k_2\leq i\leq k_1-1$ and set $\tilde{t}:=t+a^1$.
Then 
\[ 
\bar{f}_2(x_2)+\sum_{i\in I : B_1\leq A_1^i} \bar{a}^i\1_{A^i}(x)=1-s\quad\text{for } 
		x\in B \text{ such that } A^\1_2\leq x_2\leq B_2
\]
and necessarily $f_2^i+a^2\geq 1-s-\tilde{t}$ for $k_3\leq i\leq k_2-1$.
This means that the situation is the same as in \eqref{eq:induction1}.
Repeating this procedure at most $m$ times, one finds an admissible strategy $(0,\bar{f}_2,\bar{a})$ for $\eta(\1_B)$. 
Since $(f_1,\tilde{f}_2,\tilde{a})\in\Theta_0(s\1_B)$, where $\tilde{f}_2:=f_2-\bar{f}_2\geq 0$ and $\tilde{a}:=a-\bar{a}\geq0$, it follows from the linearity of $\pi$ that \eqref{eq:psi(1B).geq.mins} holds true.
\end{itemize}

We proceed now with the proof of \eqref{eq:eta.equals.min}.
First notice that for all $i$ with $B\subset A^i$ it holds
\[ 
\eta(\1_B) = \min_{a^i\in[0,1]} \big\{ a^i\bar{\pi}^i + (1-a^i)\eta^{\setminus i}(\1_B)\big\}, \]
where $\eta^{\setminus i}$ is defined as $\eta$, with the additional requirement that $a^i=0$.
Hence $a^i\in\{0,1\}$. 
If $a^i=1$ for some $i$ with $B\subset A^i$, then the proof is complete. 
Otherwise denote by $l$ an element in $\tilde{I}:=\{i\in I : B_1\leq A_1^i\text{ and } A_2^i\leq B_2\}$	such that $A^i_2\leq A^l_2$ for all $i\in\tilde{I}$.
Then $l=d_2^k$ for some $k$ and it necessarily has to hold that $f_2=1$ on $(A^l_2,B_2]$. 
Thus
\[ 
\eta(\1_B) = \nu_2((A^l_2,B_2]) + \eta(\1_{B\setminus S}) 
\]
where $S:=\mathbb{R}\times(A^l_2,B_2]$.
Since $B\setminus S$ is again a box, the claim now follows by induction.
\end{proof}

We are now ready to prove the main result of this section.

\begin{proof}[of Theorem \ref{thm:sharp.bounds}]
Let $S:=F_1^{i_1-1}\times\mathbb{R}$.
If $s=0$ and $s=1$ are both optimizers in \eqref{eq:psi.equals.min.s}, we always chose $s=0$ in order to exclude many pathological cases (see the proof below).

\textit{Case 1:} 
If $s=0$, this means that $\phi_0(\1_B)=\eta(\1_B)$.
However, by \eqref{eq:eta.equals.min} an optimal strategy for $\eta(\1_B)$ consists of either the full horizontal marginal, \textit{i.e.} $f_2=\1_{(-\infty,B_2]}$ and $a=0$, or exactly one box $A^i$ with $B_1\leq A_1^i$ (\textit{i.e.}~$a^i=1$ and $a^j=0$ for $j\neq i$) and the horizontal marginal ``above'' this box, \textit{i.e.} $f_2=\1_{(A^i_2,B_2]}$; see again Figure \ref{Fig:eta}.
Since both strategies are elements of $\Theta_0(\1_B)$, the proof is complete.

\textit{Case 2:} 
If $s=1$, this means that an optimal strategy for $\phi_0(\1_B)$ consists of $f_1^{i_1-1}=1$ plus an optimizer for $\phi_0(\1_{B\setminus S})$. 
If $B\setminus S$ is empty, this means that the optimizer of $\phi_0(\1_B)$ is the full vertical marginal, \textit{i.e.} $f_1=\1_{F_1^{i_1-1}}=\1_{(-\infty,B_1]}$. Otherwise notice that $\hat{B}:=B\setminus S$ is again a (non-empty) box.
Hence one can apply Lemma \ref{lem:sharp.bounds.reduction} again:
Define $\hat{S}:=F_1^{i_1-2}\times\mathbb{R}$ so that
\[
\phi_0(\1_{\hat{B}})
	= \min_{s\in\{0,1\}} \Big\{ s\nu_1(F_1^{i_2-1}) + s\phi_0(\1_{\hat{B}\setminus\hat{S}}) + \eta(\1_{\hat{B}}) \Big\}.
\]
Now, there are again two possibilities: 
\begin{itemize}
\item 	
If $s=0$, \textit{i.e.}~$\phi_0(\1_{\hat{B}})=\eta(\1_{\hat{B}})$, then an optimal strategy for $\eta(\1_{\hat{B}})$ consists of either the full horizontal marginal $f_2=\1_{(-\infty,\hat{B}_2]}=\1_{(-\infty,B_2]}$ only, or exactly one box $A^i$ with $\hat{B}_1\leq A_1^i$ and the horizontal marginal above the box, \textit{i.e.} $f_2=\1_{(A^i_2,\hat{B}_2]}=\1_{(A^i_2,B_2]}$.
We claim that the first case cannot happen, while in the second one it holds $A^i_1=\hat{B}_1$.
Indeed, if $f_2=\1_{(-\infty,B_2]}$ is optimal, then $(0,f_2,0)\in\Theta_0(\1_B)$.
In particular the previous choice $f_1^{i_1-1}=1$ was not optimal. 
Similarly, it follows that in the second case $A^i_1=\hat{B}_1$.

\item 
If $s=1$, then the optimal strategy for $\phi_0(\1_{\hat{B}})$ consists of $f_1^{i_1-1}=f_1^{i_1-2}=1$ plus the optimal one for $\phi_0(\1_{\hat{B}\setminus\hat{S}})$.
\end{itemize}

By induction, it follows that an optimal strategy for $\phi_0(\1_B)$ can take one of the following forms: 
\begin{align*}
\text{either } \   f_1 &= \1_{(\infty,B_1]}, f_2 = 0 \text{ and } a=0, \\
\text{or } \	   f_1 &= 0, f_2=\1_{(-\infty,B_2]}  \text{ and } a=0, \\
\text{or } \	   f_1 &=\1_{(A^i_1,B_1]}, f_2=\1_{(A^i_2,B_2]} \text{ and } a^j = 1 \text{ if } j=i \text{ and } a^j = 0 \text{ else};
\end{align*}
compare again with Figure \ref{Fig:main-theorem}.
This completes the proof.
\end{proof}

\section{Sharpness of the improved upper Fr\'echet--Hoeffding bounds for the classes $\mathcal{F}^{S,\pi}_{\preceq_0}$ and $\mathcal{F}^{S,\pi}_{\preceq_1}$}
\label{sec:proof.main.sharp}

In this section, we show that the improved upper \FH bound is pointwise sharp for the Fr\'echet classes $\mathcal{F}^{S,\pi}_{\preceq_0}$ and $\mathcal{F}^{S,\pi}_{\preceq_1}$ introduced in \eqref{eq:fclass-0} and \eqref{eq:fclass-1} respectively, while the counterexample in the subsequent Appendix \ref{sec:counterexample.not.sharp} shows that the same bound is \textit{not} pointwise sharp for the class $\mathcal{F}^{S,\pi}$.
Hence we will use again the notation of probability theory and will work with distribution functions instead of measures. 
The proof of sharpness for the first class is a straightforward application of the results in Sections \ref{sec:duality} and \ref{sec:solution.box}, while the proof for the second class follows by an explicit construction of a distribution function that belongs to the set $\mathcal{F}^{S,\pi}_{\preceq_1}$ and attains the improved \FH bound.

The question of sharpness or pointwise sharpness of the \FH bounds has a long history in the probability theory literature.
The upper \FH bound is a distribution function itself, hence the bound is actually sharp.
On the other hand, the lower \FH bound is a distribution function, and thus sharp, only in dimension $2$, while in the general case \citet{rueschendorf3} showed that the lower \FH bound is pointwise sharp.
Regarding the improved \FH bounds, \citet{tankov} showed in dimension $2$ that the upper bound is a distribution function, and thus also sharp, in case the set $S$ is decreasing (\textit{i.e.} for $(u_1,u_2),(v_1,v_2)\in S$ holds $(u_1-v_1)(u_2-v_2)\le0$).
This result was later weakened by \citet{bernard}.
On the other hand, \citet{lux2016} showed that in the higher-dimensional case ($d>3$) the upper improved \FH bound is sharp only in trivial cases.
The counterexample in Appendix \ref{sec:counterexample.not.sharp} is therefore surprising, because it shows that once the condition of Tankov is violated the bound is not even pointwise sharp.

\begin{theorem}\label{thm:sharp.zero.order}
The following holds, for every $x\in\mathbb{R}^d$,
\begin{multline*}
\max_{F \in \mathcal{F}^{S,\pi}_{\preceq_0}(F_1^\ast,\dots,F_d^\ast)}  F(x)
	= \min_{i=1,\dots,d} F^\ast_i(x_i) \wedge
		\min\Big\{ \pi_s + \sum_{i=1}^d \big(F_i^\ast(x_i) - F_i^\ast(s_i)\big)^+ : s\in S \Big\}.
\end{multline*}
\end{theorem}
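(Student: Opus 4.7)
The plan is to apply the duality of Theorem \ref{thm:main.rep} together with the explicit formula of Theorem \ref{thm:sharp.bounds} (in its $d$-dimensional extension), after identifying the Fr\'echet class $\mathcal{F}^{S,\pi}_{\preceq_0}$ with the set $\mathcal{Q}_0$ of sub-probability measures from Section \ref{sec:duality}.

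First I would establish the bijection: given $c\in[0,1]$ and a cdf $F$ on $\R^d$, let $\mu$ be the unique sub-probability measure with $\mu\big((-\infty,s_1]\times\cdots\times(-\infty,s_d]\big)=cF(s)$ for all $s\in\R^d$. Letting $\nu_i$ denote the probability measure associated with $F_i^\ast$, the marginal condition $cF_i\preceq_0 F_i^\ast$ translates into $\mu_i\preceq_0\nu_i$, and the pointwise constraint $cF(s)\le\pi_s$ for $s\in S$ into $\mu(A^s)\le\pi_s$ for $A^s:=(-\infty,s_1]\times\cdots\times(-\infty,s_d]$. With index set $I=S$ and $\overline{\pi}^s=\pi_s$, the Fr\'echet class $\mathcal{F}^{S,\pi}_{\preceq_0}$ is thus in bijection with $\mathcal{Q}_0$.

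Next, for fixed $x\in\R^d$, set $B:=(-\infty,x_1]\times\cdots\times(-\infty,x_d]$, so that $F(x)=\mu(B)$ under the identification above. Hence
\[
\sup_{F\in\mathcal{F}^{S,\pi}_{\preceq_0}} F(x)=\sup_{\mu\in\mathcal{Q}_0}\mu(B),
\]
and by Theorems \ref{thm:main.rep} and \ref{thm:sharp.bounds} the supremum is attained and equals
\[
\min\Big\{\min_{i=1,\dots,d}\nu_i((-\infty,x_i]),\ \min_{s\in S}\Big\{\pi_s+\sum_{i=1}^d\nu_i((s_i,x_i])\Big\}\Big\}.
\]
Substituting $\nu_i((-\infty,x_i])=F_i^\ast(x_i)$ and $\nu_i((s_i,x_i])=(F_i^\ast(x_i)-F_i^\ast(s_i))^+$ (the interval being empty when $s_i\ge x_i$, whence the positive part) gives exactly the identity stated in the theorem.

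The only nontrivial ingredient is the $d$-dimensional extension of Theorem \ref{thm:sharp.bounds}. One re-runs the inductive reduction of Lemma \ref{lem:sharp.bounds.reduction} in each coordinate direction: at each stage either a strip $F_j^{i_j-1}\times\R^{d-1}$ is taken in full (yielding a recursion on a smaller box) or it is replaced by a $(d{-}1)$-dimensional analogue of the functional $\eta$. Iterating over all $d$ coordinates produces precisely three families of candidate optimal strategies -- a single full marginal, or a box $A^s$ together with up to $d-1$ residual strips adjacent to it -- which matches the right-hand side above. This coordinate-wise induction is where essentially all the work beyond the two-dimensional case sits; the translation back to distribution functions is then purely notational.
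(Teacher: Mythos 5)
Your proposal is correct and follows exactly the route of the paper's own (very terse) proof: identify $\mathcal{F}^{S,\pi}_{\preceq_0}$ with $\mathcal{Q}_0$ via the correspondence between sub-cdfs and sub-probability measures, set $B=(-\infty,x_1]\times\cdots\times(-\infty,x_d]$, and read off the answer from Theorem \ref{thm:sharp.bounds}. You additionally spell out the bijection and the substitution $\nu_i((s_i,x_i])=(F_i^\ast(x_i)-F_i^\ast(s_i))^+$, and you give a sketch of the coordinate-wise induction needed to extend Theorem \ref{thm:sharp.bounds} from $d=2$ to general $d$ — details the paper leaves implicit with the remark that the higher-dimensional case follows ``by analogous arguments.''
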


\begin{proof}
Notice that Theorem \ref{thm:sharp.zero.order} is a reformulation of Theorem \ref{thm:sharp.bounds}.
Indeed, the set $\mathcal{Q}_0$ contains all measures induced by the distribution functions in 
$\mathcal{F}_{\preceq_0}^{S,\pi}(F_1^\ast,\dots,F_d^\ast)$, and vice versa.
\end{proof}

\begin{proposition}
\label{prop:sharp.first.order}
Let $S$ be a bounded subset of $\mathbb{R}^d$. Then one has
\begin{equation}\label{sharp5}
	\max_{F\in \mathcal{F}^{S,\pi}_{\preceq_1}(F_1^\ast,\dots,F_d^\ast)} F(x) 
		= \min_{i=1,\dots,d} F^\ast_i(x_i) \wedge \min\{ \pi_s\colon s\in S \text{ such that } x\leq s \},
\end{equation}
for every $x\in\mathbb{R}^d$, where $x\leq s$ whenever $x_i\leq s_i$ for all $i=1,\dots,d$.
\end{proposition}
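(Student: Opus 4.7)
The upper bound $(\leq)$ in \eqref{sharp5} is immediate: any $F\in \mathcal{F}^{S,\pi}_{\preceq_1}(F_1^\ast,\dots,F_d^\ast)$ satisfies $F(x)\leq F_i(x_i)\leq F_i^\ast(x_i)$ for each $i$ by the definition of $F_i\preceq_1 F_i^\ast$, and $F(x)\leq F(s)\leq \pi_s$ by monotonicity of $F$ for every $s\in S$ with $x\leq s$.

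For the reverse direction I would exhibit an explicit $F\in \mathcal{F}^{S,\pi}_{\preceq_1}(F_1^\ast,\dots,F_d^\ast)$ attaining the right-hand side of \eqref{sharp5}, which I denote by $p$ (the case $p=1$ is trivial via $\mu=\delta_x$, so assume $p<1$). The plan is to place a Dirac mass of size $p$ at $x$ and push the remaining $1-p$ of mass strictly to the ``upper right'' of both $x$ and all points of $S$. Using the boundedness of $S$, I fix a threshold
\[
M \;>\; \max\Big(\max_i x_i,\; \sup_{s\in S,\,i} s_i\Big),
\]
and for each $i$ I define the one-dimensional cdf
\[
G_i(t) := \begin{cases} 0 & \text{if } t < M, \\[2pt] \dfrac{F_i^\ast(t)-p}{1-p} & \text{if } t\geq M. \end{cases}
\]
This $G_i$ is a valid cdf: it is non-decreasing because $F_i^\ast$ is, its jump at $M$ of size $(F_i^\ast(M)-p)/(1-p)$ is non-negative since $F_i^\ast(M)\geq F_i^\ast(x_i)\geq p$, and $G_i(t)\to 1$ at infinity. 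I then let $\tilde\mu$ be any coupling of $G_1,\dots,G_d$ (for instance the independent or the comonotonic one), necessarily supported in $[M,\infty)^d$, and set $\mu := p\delta_x + (1-p)\tilde\mu$, writing $F$ for its cdf.

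The verification is then mechanical. Because $M$ strictly exceeds each $x_i$ and each $s_i$ for $s\in S$, the support $[M,\infty)^d$ of $\tilde\mu$ is disjoint from $(-\infty,x]$ and from each $(-\infty,s]$, so $\tilde\mu((-\infty,x]) = \tilde\mu((-\infty,s]) = 0$. This gives $F(x)=p$ and $F(s) = p\,\mathbf{1}_{x\leq s}\leq \pi_s$ (by the definition of $p$ when $x\leq s$, and trivially otherwise). The $i$-th marginal cdf of $\mu$ is $p\,\mathbf{1}_{t\geq x_i}+(1-p)G_i(t)$, which equals $0$ for $t<x_i$, equals $p\leq F_i^\ast(x_i)\leq F_i^\ast(t)$ for $x_i\leq t<M$, and equals $F_i^\ast(t)$ for $t\geq M$; hence $F_i\preceq_1 F_i^\ast$. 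Thus $F\in \mathcal{F}^{S,\pi}_{\preceq_1}(F_1^\ast,\dots,F_d^\ast)$ with $F(x)=p$.

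The conceptual point, rather than any serious technical obstacle, is to recognise that the one-sidedness of first-order stochastic dominance grants exactly the freedom needed to shift arbitrary amounts of ``leftover'' mass arbitrarily far to the right without violating any marginal or $\pi$-constraint, while the boundedness of $S$ is what guarantees ``far to the right'' is a well-defined finite notion. This freedom is unavailable for the rigid class $\mathcal{F}^{S,\pi}$, consistently with the counterexample in Appendix \ref{sec:counterexample.not.sharp}.
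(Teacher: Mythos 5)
Your proof is correct and follows essentially the same route as the paper: place mass $p$ (the right-hand side) on $x$ and push the remaining $1-p$ far enough to the upper-right that it escapes $(-\infty,x]$ and every $(-\infty,s]$ for $s\in S$, which is exactly where the boundedness of $S$ is used. The paper splits into two cases depending on where the minimum is attained and writes the extremal cdf as the comonotonic coupling $F(y)=\min_j G_j(y_j)$ of adjusted one-dimensional cdfs, whereas your uniform treatment via $\mu = p\delta_x + (1-p)\tilde\mu$ with an arbitrary coupling $\tilde\mu$ of the shifted tail marginals is a slightly cleaner packaging of the same idea.
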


\begin{proof}
The definition of $\mathcal{F}^{S,\pi}_{\preceq_1}(F_1^\ast,\dots,F_d^\ast)$ immediately implies that the
left hand side (LHS) of \eqref{sharp5}
% \[	\max_{F\in \mathcal{F}^{S,\pi}_{\preceq_1}(F_1^\ast,\dots,F_d^\ast)} F(x) 
% 	=\min\{ F_j^\ast(x_j) : j=1,\dots,d \}\wedge
% 	\min\{ \pi_s : s\in S \text{ such that } x\leq s \}.\]
is smaller than or equal to its right hand side (RHS).
In order to show the reverse inequality, fix $x\in\mathbb{R}^d$ 
and let $r\in\mathbb{R}$ so large that 
\[ r\geq x_j+1 \text{ and } r\geq s_j+1 \text{ for all } j=1,\dots,d, \text{ and } s\in S.\] 
Distinguish between the following two cases.	

\textit{Case 1:}
Assume that the right hand side is attained at $\min_j F_j^*(x_j)$.
Define 
\[ 
G_j(t) := F_j^*(x_j)1_{[x_j, r)}(t) + F_j^*(t)1_{[r,\infty)}(t),
\]
for $t\in\mathbb{R}$, $j=1,\dots,d$, and $F(y)=\min_{j=1,\dots,d} G_j(y_j)$ for $y\in\mathbb{R}^d$.
One can check that $F$ is a cdf, and it holds $F_j(t)=G_j(t)\leq F_j^\ast(t)$ for all $t\in\mathbb{R}$
and $j=1,\dots,d$.
Let $s\in S$. If $x\leq s$, then $x_j\leq s_j\leq r$ for $j=1,\dots,d$ and therefore
$F(s)=\min_j G_j(s)= \min_j F_j^\ast(x_j)=\text{RHS}\leq \pi_s$.
Otherwise, \textit{i.e.}~if there exist some $j^\ast$ such that $s_{j^\ast}<x_{j^\ast}$, one has 
$F(s)\leq G_{j^\ast}(s_{j^\ast})=0\leq \pi_s$.
This shows 
$F\in\mathcal{F}^{S,\pi}_{\preceq_1}(F_1^\ast,\dots,F_d^\ast)$
and sice since $F(x)=\min_j F_j^\ast(x_j)=\text{RHS}$, one obtains that $\text{LHS}\geq \text{RHS}$.

\textit{Case 2:}
Assume that the right hand side is attained at $\pi_{s^\ast}$ for some $s^\ast\in S$.
Define
\[ 
G_j(t) := \pi_{s^\ast} 1_{[x_j, r)}(t) + F_j^*(t)1_{[r,\infty)}(t), 
\]
for $t\in\mathbb{R}$, $j=1,\dots,d$, and $F(y)=\min_j G_j(y_j)$ for $y\in\mathbb{R}^d$.
One can again check that $F$ is a cdf and since $\pi_{s^\ast}\leq F_j^\ast(x_j)$,
one also has $F_j(t)=G_j(t)\leq F_j^\ast(t)$ for all $t\in\mathbb{R}$
and $j=1,\dots,d$.
For $s\in S$ with $x\leq s$, it holds
$F(s)=\min_j G_j(s)= \min_j F_j^\ast(x_j)=\pi_{s^\ast}\leq \pi_s$
since the RHS is attained at $\pi_{s^\ast}$.
Otherwise it holds $F(s)=0$, so that 
$F\in\mathcal{F}^{S,\pi}_{\preceq_1}(F_1^\ast,\dots,F_d^\ast)$.
Since $F(x)=\min_j F_j^\ast(x_j)=\text{RHS}$, one therefore obtains that $\text{LHS}\geq \text{RHS}$.
\end{proof}

\begin{appendix}

\section{The improved upper Fr\'echet--Hoeffding bound is not pointwise sharp for the class $\mathcal{F}^{S,\pi}$}
\label{sec:counterexample.not.sharp}

The following counterexample --- communicated to us by Stephan Eckstein --- illustrates that the improved upper Fr\'echet--Hoeffding bound
\[
	\min\{  F_i^\ast(x_i): i=1,\dots,d\}
		\wedge
	\min\Big\{ \pi_s + \sum_{i=1}^d (F_i^\ast(x_i) - F_i^\ast(s_i) )^+ : s\in S\Big\}
\]
is in general not pointwise sharp for $\mathcal{F}^{S,\pi}(F_1^\ast,\dots,F_d^\ast)$, even in dimension $d=2$. 

\begin{example}
The marginal cdfs are given by
\[ 
	F_1^\ast:=F_2^\ast:=0.1\cdot 1_{[0,1)}+0.3\cdot 1_{[1,2)}+0.35\cdot 1_{[2,3)} + 1_{[3,\infty)},
\]
\textit{i.e.}~$F^\ast_i$ are cdfs of the probability measure $0.1 \delta_{0}+0.2 \delta_{1}+0.05 \delta_{2}+0.65 \delta_{3}$.
Consider the additional information
\[
	S=\{(0,0),(0,2),(2,0),(1,1)\}
		\quad\text{with}\quad
	\pi_{(0,0)}=0\text{ and }\pi_{(0,2)}=\pi_{(2,0)}=\pi_{(1,1)}=0.1.
\]
For the cdf $\hat F$ which corresponds to the probability measure $\sum_{x_1,x_2=0}^3 c_{x_1,x_2}\delta_{(x_1,x_2)}$ with weights given by
\begin{table}[htpb]\centering
\begin{tabular}{|c|c|c|c|c|}
\hline
$c_{x_1,x_2}$ & $x_2=0$ & $x_2=1$& $x_2=2$ & $x_2=3$\\
\hline
$x_1=0$ & 0& 0.05 & 0.05&0\\
\hline 
$x_1=1$ & 0.05 & 0 & 0 &0.15\\
\hline
$x_1=2$ & 0.05 & 0 & 0 &0 \\
\hline
$x_1=3$ & 0 & 0.15 & 0 & 0.5\\
\hline
\end{tabular}
\end{table}

\noindent one can verify that 
\[
	\hat F\in \mathcal{F}^{S,\pi}(F_1^\ast,F_2^\ast) 
		:= \big\{F\in\mathcal{F}(F_1^\ast,F_2^\ast)\colon F(s) = \pi_s \text{ for all } s\in S\big\}.
\] 
This shows that $\mathcal{F}^{S,\pi}(F_1^\ast,F_2^\ast)\neq\emptyset$.
Let $x=(x_1,x_2):=(0,1)$, then the improved Fr\'echet--Hoeffding bound is given by
\[ 	
	\min\{F_1^\ast(0),F_2^\ast(1)\}
		\wedge
	\min\Big\{ \pi_s + \sum_{j=1}^2 (F_j^\ast(x_j) - F_j^\ast(s_j) )^+ : s\in S\Big\} = 0.1,
\]
whereas for $\varphi(u)=1_{\{u\leq x\}}$ it can easily be checked that
\[
	\sup_{F \in \mathcal{F}^{S,\pi}(F_1^\ast,F_2^\ast)} \int \varphi\, \ud F 
		= \sup_{F \in \mathcal{F}^{S,\pi}(F_1^\ast,F_2^\ast)} F(0,1)
		=\hat F(0,1) =0.05.
\]
\end{example}
\end{appendix}

\section{Derivation of the improved upper \FH bounds for the classes $\mathcal{F}^{S,\pi}_{\preceq_0}$ and $\mathcal{F}^{S,\pi}_{\preceq_1}$}
\label{sec:derivation}

In this appendix we show that the improved upper \FH bound is valid for the classes $\mathcal{F}^{S,\pi}_{\preceq_0}$ and $\mathcal{F}^{S,\pi}_{\preceq_1}$.
The derivation uses simple arguments borrowed from copula theory, see \textit{e.g.} \cite{lux2016}.
Let us point out that the sharpness results in \cref{sec:proof.main.sharp} allow us to recover the statements proved below.

\begin{lemma}
Let $G\in \mathcal{F}^{S,\pi}_{\preceq_0}(F_1^*,\dots,F_d^*)$, then we have that
\begin{align}
G(x_1,\dots,x_d) \le \min_{i=1,\dots,d} F^\ast_i(x_i) \wedge \min\Big\{ \pi_s + \sum_{i=1}^d \big(F_i^\ast(x_i) - F_i^\ast(s_i)\big)^+ : s\in S\Big\},
\end{align}	
for all $(x_1,\dots,x_d)\in\R^d$.
Moreover, let $H\in \mathcal{F}^{S,\pi}_{\preceq_1}(F_1^*,\dots,F_d^*)$, then we have that
\begin{align}
H(x_1,\dots,x_d) \le \min_{i=1,\dots,d} F^\ast_i(x_i) \wedge \min\{ \pi_s : s\in S \text{ such that } x\le s \},
\end{align}	
for all $(x_1,\dots,x_d)\in\R^d$.
\end{lemma}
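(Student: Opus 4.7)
The plan is to reduce both inequalities to a single Lipschitz-type estimate for multivariate cdfs, then combine it with the defining constraints of each Fr\'echet class.

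\textbf{Step 1 (Marginal bounds).} The inequality $G(x)\le F_i^\ast(x_i)$ (resp.\ $H(x)\le F_i^\ast(x_i)$) is almost immediate. For any cdf $F$ on $\R^d$ with $i$-th marginal $F_i$ one has $F(x_1,\dots,x_d)\le F_i(x_i)$. For $G=cF\in\mathcal{F}^{S,\pi}_{\preceq_0}$ this gives $G(x)\le cF_i(x_i)\le F_i^\ast(x_i)$, where the last inequality comes from applying $cF_i\preceq_0 F_i^\ast$ to the half-line $(-\infty,x_i]$. For $H\in\mathcal{F}^{S,\pi}_{\preceq_1}$, the first-order dominance $F_i\preceq_1 F_i^\ast$ directly yields $F_i(x_i)\le F_i^\ast(x_i)$.

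\textbf{Step 2 (Lipschitz-type inequality for cdfs).} The workhorse is the following: for any cdf $F$ on $\R^d$ with marginals $F_1,\dots,F_d$ and any $x,s\in\R^d$,
\[
F(x)-F(s)\le \sum_{i=1}^d \bigl(F_i(x_i)-F_i(s_i)\bigr)^+.
\]
To see this, let $X\sim F$ and observe the inclusion
\[
\{X\le x\}\setminus\{X\le s\}\ \subseteq\ \bigcup_{i:\,x_i>s_i}\{s_i<X_i\le x_i\},
\]
since if $X\le x$ but $X_j>s_j$ for some $j$, then $s_j<X_j\le x_j$ forces $x_j>s_j$. A union bound yields the claim.

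\textbf{Step 3 (Bound for $\mathcal{F}^{S,\pi}_{\preceq_0}$).} Fix $s\in S$ and apply Step 2 to the underlying cdf $F$, then multiply by $c$:
\[
G(x)-G(s)\ \le\ \sum_{i=1}^d \bigl(cF_i(x_i)-cF_i(s_i)\bigr)^+.
\]
When $x_i\ge s_i$, zeroth-order dominance gives $cF_i(x_i)-cF_i(s_i)\le F_i^\ast(x_i)-F_i^\ast(s_i)$, and when $x_i<s_i$ the left-hand positive part vanishes; in either case $(cF_i(x_i)-cF_i(s_i))^+\le (F_i^\ast(x_i)-F_i^\ast(s_i))^+$. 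Combining with $G(s)\le \pi_s$ and taking the minimum over $s\in S$ produces the second piece of the bound; intersecting with Step 1 gives the full inequality.

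\textbf{Step 4 (Bound for $\mathcal{F}^{S,\pi}_{\preceq_1}$).} Here the argument is even simpler: whenever $s\in S$ satisfies $x\le s$, monotonicity of $H$ gives $H(x)\le H(s)\le \pi_s$. Taking the minimum over such $s$ and intersecting with Step 1 yields the claimed bound. No Lipschitz estimate is needed, which reflects the fact that first-order stochastic dominance of marginals already rules out mass moving ``outward'' from $s$.

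The only mildly nontrivial step is the Lipschitz inequality of Step 2; everything else is a direct translation of the class definitions. No calculation is repeated from the main body.
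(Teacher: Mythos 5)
Your proof is correct and follows essentially the same route as the paper: bound by the marginals, establish the Lipschitz-type estimate $F(x)-F(s)\le\sum_i(F_i(x_i)-F_i(s_i))^+$, and combine with the defining constraints of each class. The only differences are cosmetic: you derive the Lipschitz estimate by a union bound over $\{X\le x\}\setminus\{X\le s\}\subseteq\bigcup_{i:x_i>s_i}\{s_i<X_i\le x_i\}$ rather than by the paper's coordinate-wise telescoping sum, and in Step 4 you cleanly invoke monotonicity $H(x)\le H(s)$ directly instead of specializing the Lipschitz estimate to $x\le s$ — both streamline the exposition without changing the substance.
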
 

\begin{proof}
By the definition of the class $\mathcal{F}^{S,\pi}_{\preceq_0}(F_1^*,\dots,F_d^*)$ we have that $G=cF$ where $c\in[0,1]$ and $F$ is a cdf on $\R^d$.
Let us denote by $F_1,\dots,F_d$ the marginals of $F$, then we have immediately that
\begin{align}\label{eq:iFH-0-easy}
G(x_1,\dots,x_d) = c F(x_1,\dots,x_d)
	\le c \min_{i=1,\dots,d} F_i(x_i) = \min_{i=1,\dots,d} c F_i(x_i)
	\le \min_{i=1,\dots,d} F^\ast_i(x_i),
\end{align}
where we have used that $F$ is a cdf for the first inequality and that $cF_i\preceq_0 F_i^\ast$ for the second one.

Using that $F$ is a cdf on $\R^d$ with marginals $F_1,\dots,F_d$, we have the following estimate for any $x_i,s_i\in\R$
\begin{align*}
F(x_1,\dots,x_i,\dots,x_d) - F(x_1,\dots,s_i,\dots,x_d)
	\le \big( F_i(x_i) - F_i(s_i) \big)^+,
\end{align*}
hence, using a telescoping sum, we get the following estimate for any $x,s\in\R^d$
\begin{align}\label{eq:iFH-0-estimate}
F(x) - F(s)	\le \sum_{i=1}^d \big( F_i(x_i) - F_i(s_i) \big)^+.
\end{align}
Therefore, using again that $G=cF$, we have that
\begin{align*}
G(x) &\le  cF(s) + \sum_{i=1}^d \big( cF_i(x_i) - cF_i(s_i) \big)^+\\
	&\le \pi_s + \sum_{i=1}^d \big( F^\ast_i(x_i) - F^\ast_i(s_i) \big)^+,
\end{align*}
where we have used that $cF(s)\le \pi_s$ for all $s\in S$ and that $cF_i(x_i) - cF_i(s_i) \le F^\ast_i(x_i) - F^\ast_i(s_i)$ for all $x_i\le s_i$.
The statement follows by minimizing over all $s\in S$ and combining the outcome with \eqref{eq:iFH-0-easy}.

Now, let $H\in \mathcal{F}^{S,\pi}_{\preceq_1}(F_1^*,\dots,F_d^*)$.
Since $H$ is a cdf on $\R^d$ and using that $F_i\preceq_1 F_i^\ast$, we get immediately that $H(x_1,\dots,x_d) \le \min_{i=1,\dots,d} F^\ast_i(x_i)$.
Moreover, the estimate in \eqref{eq:iFH-0-estimate} is still valid, therefore from the definition of the class $\mathcal{F}^{S,\pi}_{\preceq_1}(F_1^*,\dots,F_d^*)$ we arrive at
\begin{align*}
H(x) \le \pi_s + \sum_{i=1}^d \big( F_i(x_i) - F_i(s_i) \big)^+.
\end{align*}
However, the information available on the marginals, \textit{i.e.} that $F_i\preceq_1 F_i^\ast$, does not allow us to estimate the difference $F_i(x_i) - F_i(s_i)$, and the best we can say is that for $x\le s$ this term collapses to zero.
The statement follows once again my minimizing over all $s\in S$.
\end{proof}

\bibliographystyle{abbrvnat}
\bibliography{bib}

\end{document}